\numberwithin{equation}{section}
\numberwithin{figure}{section}
\theoremstyle{plain}
\newtheorem{thm}{\protect\theoremname}
\theoremstyle{definition}
\newtheorem{defn}[thm]{\protect\definitionname}
\theoremstyle{plain}
\newtheorem{prop}[thm]{\protect\propositionname}
\theoremstyle{remark}
\newtheorem{rem}[thm]{\protect\remarkname}
\theoremstyle{plain}
\newtheorem{lem}[thm]{\protect\lemmaname}
\theoremstyle{conjecture}
\theoremstyle{plain}
\providecommand{\propositionname}{Proposition}
\providecommand{\theoremname}{Theorem}
\providecommand{\definitionname}{Definition}
\providecommand{\lemmaname}{Lemma}
\providecommand{\propositionname}{Proposition}
\providecommand{\remarkname}{Remark}
\providecommand{\theoremname}{Theorem}
\providecommand{\theoremname}{Conjecture}
\begin{document}
\title[Dynamical system related to epidemic SISI model]{A non-linear discrete-time dynamical system related to epidemic SISI model}
\author{s. k. shoyimardonov }
\begin{abstract}
We consider SISI epidemic model with discrete-time. The crucial point of this model is that an
individual can be infected twice. This non-linear evolution operator depends on seven parameters and
we assume that the population size under consideration is constant, so death rate is the
same with birth rate per unit time. Reducing to quadratic stochastic operator (QSO)  we study the dynamical system of the SISI model.

\end{abstract}

\subjclass[2000]{34D20 (92D25).}
\keywords{Quadratic stochastic operator, fixed point, discrete-time, SISI model, epidemic}

\address{Sobirjon Shoyimardonov. \ \ V.I.Romanovskiy institute of mathematics, 81, Mirzo Ulug'bek str.,
100125, Tashkent, Uzbekistan.}
\email{shoyimardonov@inbox.ru}

\maketitle
\selectlanguage{british}%

\section{Introduction}

In \cite{Green}  SISI model is considered  in continuous time as a spread of
bovine respiratory syncytial virus (BRSV) amongst cattle.
They performed an equilibrium and stability analysis and considered an applications to Aujesky's disease (pseudorabies virus) in pigs.
In \cite{Muller} SISI model was considered as an example and characterised the conditions for fixed point equation.
In the both these works it was assumed that the population size under consideration is a constant,
so the per capita death rate is equal to per capita birth rate.

 Let us consider SISI model \cite{Muller}:
\begin{equation}
\begin{cases}
\frac{dS}{dt} & =b(S+I+S_1+I_1)-\mu S-\beta_1 A(I,I_1)S\\[2mm]
\frac{dI}{dt} & =-\mu I+\beta_1 A(I,I_1)S-\alpha I\\[2mm]
\frac{dS_1}{dt} & =-\mu S_1+\alpha I- \beta_2 A(I,I_1)S_1\\[2mm]
\frac{dI_1}{dt} & =-\mu I_1+\beta_2 A(I,I_1)S_1
\end{cases}\label{eq:Eq1}
\end{equation}
where
$S-$  density of susceptibles who did not have the disease before,
$I-$  density of first time infected persons,
$S_1-$  density of recovereds,
$I_1-$  density of second time infected persons,
$b-$ birth rate,
$\mu-$  death rate,
$\alpha-$ recovery rate,
$\beta_1-$  susceptibility of persons in $S$,
$\beta_2-$  susceptibility of persons in $S_1,$
$k_1-$  infectivity of persons in $I$,
$k_2-$  infectivity of persons in $I_1.$
Moreover, $A(I,I_1)$ denotes the so-called force of infection,
$$A(I,I_1)=\frac{k_1I+k_2I_1}{P}$$
and $P=S+I+S_1+I_1$ denotes the total population size. Here we do some replacements:
$$x=\frac{S}{P}, u=\frac{I}{P}, y=\frac{S_1}{P}, v=\frac{I_1}{P}$$
In (\ref{eq:Eq1}) we assume that $b=\mu$ and by substituting $x,u,y,v$ we have
\begin{equation}
\begin{cases}
\frac{dx}{dt} & =b-bx-\beta_1 A(u,v)x\\[2mm]
\frac{du}{dt} & =-bu+\beta_1 A(u,v)x-\alpha u\\[2mm]
\frac{dy}{dt} & =-by+\alpha u -\beta_2 A(u,v)y\\[2mm]
\frac{dv}{dt} & =-bv+\beta_2 A(u,v)y
\end{cases}\label{cont}
\end{equation}
where all parameters are non-negative. We notice that $\frac{d}{dt}\left(x+u+y+v\right)=0$, from
this we deduce that the total population size is constant over time and therefore we assume $x+u+y+v=1$.

\section{Quadratic Stochastic Operators}

\emph{The quadratic stochastic operator} (QSO) \cite{GMR}, \cite{L} is
a mapping of the standard simplex.
\begin{equation}
S^{m-1}=\{x=(x_{1},...,x_{m})\in\mathbb{R}^{m}:x_{i}\geq0,\sum\limits _{i=1}^{m}x_{i}=1\}\label{2}
\end{equation}
into itself, of the form
\begin{equation}
V:x'_{k}=\sum\limits _{i=1}^{m}\sum\limits _{j=1}^{m}P_{ij,k}x_{i}x_{j},\qquad k=1,...,m,\label{3}
\end{equation}
where the coefficients $P_{ij,k}$ satisfy the following conditions
\begin{equation}
P_{ij,k}\geq0,\quad P_{ij,k}=P_{ji,k},\quad\sum\limits _{k=1}^{m}P_{ij,k}=1,\qquad(i,j,k=1,...,m).\label{4}
\end{equation}

Thus, each quadratic stochastic operator $V$ can be uniquely
defined by a cubic matrix $\mathbb{P}=(P_{ij,k})_{i,j,k=1}^{m}$
with conditions (\ref{4}).

Note that each element $x\in S^{m-1}$ is a probability distribution
on $\left\llbracket 1,m\right\rrbracket =\{1,...,m\}.$
Each such distribution can be interpreted as a state of the corresponding
biological system.

For a given $\lambda^{(0)}\in S^{m-1}$ the \emph{trajectory}
(orbit) $\{\lambda^{(n)};n\geq0\}$ of $\lambda^{(0)}$ under
the action of QSO (\ref{3}) is defined by
\[
\lambda^{(n+1)}=V(\lambda^{(n)}),\;n=0,1,2,...
\]

The main problem in mathematical biology consists in the study of the asymptotical behaviour of the trajectories. The
difficulty of the problem depends on given matrix $\mathbb{P}$.

\begin{defn}
A QSO $V$ is called regular if for any initial point $\lambda^{(0)}\in S^{m-1}$,
the limit
\[
\lim_{n\to\infty}V^{n}(\lambda^{(0)})
\]
exists, where $V^n$ denotes $n$-fold composition of $V$ with itself (i.e. $n$ time iterations of $V$).
\end{defn}

\section{Reduction to QSO}

In this paper we study the discrete time dynamical system associated to the system
(\ref{cont}).

Define the evolution operator
 $
V:S^{3}\rightarrow\mathbb{R}^{4},\quad\left(x,u,y,v\right)\mapsto\left(x^{(1)},u^{(1)},y^{(1)},v^{(1)}\right)
$

\begin{equation}
V:\left\{ \begin{alignedat}{1}x^{(1)} & =x+b-bx-\beta_1 A(u,v)x\\
u^{(1)} & =u-bu+\beta_1 A(u,v)x-\alpha u\\
y^{(1)} & =y-by+\alpha u -\beta_2 A(u,v)y\\
v^{(1)} & =v-bv+\beta_2 A(u,v)y
\end{alignedat}
\right.\label{disc}
\end{equation}
where $A(u,v)=k_1u+k_2v.$
Note that if $k_1=k_2=0$ then $A(u,v)=0$ and operator (\ref{disc}) becomes linear operator which is well studied. 

By definition the operator $V$ has a form of QSO, but the
parameters of this operator are not related to $P_{ij,k}$. Here
to make some relations with $P_{ij,k}$ we find conditions on
parameters of (\ref{disc}) rewriting it in the form (\ref{3})
(as in \cite{RSH},\cite{RSHV}). Using $x+u+y+v=1$ we change
the form of the operator (\ref{disc}) as following:
\[
V:\left\{ \begin{alignedat}{1}x^{(1)} & =x(1-b)(x+u+y+v)+b(x+u+y+v)^2-\beta_1(k_{1}u+k_{2}v)x\\
u^{(1)} & =u(1-b-\alpha)(x+u+y+v)+\beta_1(k_{1}u+k_{2}v)x\\
y^{(1)} & =y(1-b)(x+u+y+v)+\alpha u(x+u+y+v) -\beta_2 (k_{1}u+k_{2}v)y\\
v^{(1)} & =v(1-b)(x+u+y+v)+\beta_2 (k_{1}u+k_{2}v)y
\end{alignedat}
\right.
\]
From this system and QSO (\ref{3}) for the case $m=4$ we obtain the
following relations:

\begin{equation}
\begin{array}{cccc}
\begin{aligned}{\scriptstyle P_{11,1}} & ={\scriptstyle 1},\\
{\scriptstyle 2P_{14,1}} & ={\scriptstyle 1+b-\beta_{1}k_{2}},\\
{\scriptstyle 2P_{24,1}} & ={\scriptstyle 2b},\\
{\scriptstyle P_{44,1}} & ={\scriptstyle b},\\
{\scriptstyle P_{22,2}} & ={\scriptstyle 1-b-\alpha},\\
{\scriptstyle 2P_{12,3}} & ={\scriptstyle \alpha},\\
{\scriptstyle 2P_{23,3}} & ={\scriptstyle 1-b+\alpha-\beta_{2}k_{1}},\\
{\scriptstyle 2P_{34,3}} & ={\scriptstyle 1-b-\beta_{2}k_{2}},\\
{\scriptstyle 2P_{24,4}} & ={\scriptstyle 1-b},\\

\end{aligned}

\begin{aligned}{\scriptstyle 2P_{12,1}} & ={\scriptstyle 1+b-\beta_{1}k_{1}},\\
 {\scriptstyle P_{22,1}} & ={\scriptstyle b},\\
 {\scriptstyle P_{33,1}} & ={\scriptstyle b},\\
 {\scriptstyle 2P_{12,2}} & ={\scriptstyle 1-b-\alpha+\beta_{1}k_{1}},\\
{\scriptstyle 2P_{23,2}} & ={\scriptstyle 1-b-\alpha},\\
{\scriptstyle 2P_{13,3}} & ={\scriptstyle 1-b},\\
{\scriptstyle 2P_{24,3}} & ={\scriptstyle \alpha},\\
{\scriptstyle 2P_{14,4}} & ={\scriptstyle 1-b},\\
{\scriptstyle 2P_{34,4}} & ={\scriptstyle 1-b+\beta_{2}k_{2}},\\

\end{aligned}

 & \begin{aligned}{\scriptstyle 2P_{13,1}} & ={\scriptstyle 1+b},\\
 {\scriptstyle 2P_{23,1}} & ={\scriptstyle 2b},\\
 {\scriptstyle 2P_{34,1}} & ={\scriptstyle 2b},\\
{\scriptstyle 2P_{14,2}} & ={\scriptstyle \beta_{1}k_{2}},\\
{\scriptstyle 2P_{24,2}} & ={\scriptstyle 1-b-\alpha},\\
{\scriptstyle P_{22,3}} & ={\scriptstyle \alpha},\\
{\scriptstyle P_{33,3}} & ={\scriptstyle 1-b},\\
{\scriptstyle 2P_{23,4}} & ={\scriptstyle \beta_{2}k_{1}},\\
{\scriptstyle P_{44,4}} & ={\scriptstyle 1-b},\\

\end{aligned}
\end{array}\label{par}
\end{equation}

${\scriptstyle \text{other }} {\scriptstyle P_{ij,k}=0}.$

\begin{prop}\label{pc}
We have $V\left(S^{3}\right)\subset S^{3}$ if and only if the
non-negative parameters $b,\alpha, \beta_1, \beta_2, k_1, k_2$
verify the following conditions
\begin{equation}
\begin{array}{cccc}
\alpha+b\leq1, & \beta_{1}k_{2}\leq2, & \beta_{2}k_{1}\leq2,\medskip\\
b+ \beta_{2}k_{2}\leq1, & \left|b-\beta_{1}k_{1}\right|\leq1, & \left|b-\beta_{2}k_{2}\right|\leq1,\medskip\\
\left|b-\beta_{1}k_{2}\right|\leq1,  & \left|\alpha+b-\beta_{1}k_{1}\right|\leq1, & \left|\alpha-b-\beta_{2}k_{1}\right|\leq1.
\end{array}\label{cond}
\end{equation}

Moreover, under conditions (\ref{cond}) the operator $V$ is
a QSO.
\end{prop}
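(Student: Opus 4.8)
The plan is to split the requirement $V(S^{3})\subset S^{3}$ into two independent parts: preservation of the coordinate sum, and non-negativity of every image coordinate. For the first, I would simply add the four lines of (\ref{disc}): writing $P=x+u+y+v$, the three interaction terms $\beta_{1}A(u,v)x$, $\alpha u$ and $\beta_{2}A(u,v)y$ each occur once with a plus and once with a minus sign and telescope, leaving $x^{(1)}+u^{(1)}+y^{(1)}+v^{(1)}=P+b(1-P)$, which equals $1$ on $\{P=1\}$. This holds for all parameter values, so the normalization $\sum_{k}P_{ij,k}=1$ in (\ref{4}) is automatic; the symmetry $P_{ij,k}=P_{ji,k}$ is likewise built into the way the cross terms were split in (\ref{par}). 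Consequently the only part of the QSO conditions (\ref{4}) that can fail is non-negativity $P_{ij,k}\ge0$, and proving the Proposition reduces to showing that, among the coefficients in (\ref{par}), non-negativity is equivalent to (\ref{cond}).

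For sufficiency I would assume (\ref{cond}) and run down the table (\ref{par}). Every coefficient whose right-hand side is a sum of $b,\alpha,\beta_{1}k_{1},\beta_{1}k_{2},\beta_{2}k_{1},\beta_{2}k_{2}$ is non-negative at once, while each remaining coefficient carries one inequality of (\ref{cond}): $P_{22,2}=1-b-\alpha\ge0$ is $\alpha+b\le1$; $2P_{34,3}=1-b-\beta_{2}k_{2}\ge0$ is $b+\beta_{2}k_{2}\le1$; $2P_{12,1}=1+b-\beta_{1}k_{1}\ge0$ gives the lower bound in $|b-\beta_{1}k_{1}|\le1$ (the upper bound being automatic from $b\le1$); and $2P_{12,2}=1-b-\alpha+\beta_{1}k_{1}\ge0$ gives the upper bound in $|\alpha+b-\beta_{1}k_{1}|\le1$. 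One checks in passing that $b\le1$, needed for several coefficients, follows from $b+\beta_{2}k_{2}\le1$, and that $\beta_{1}k_{2}\le2$, $\beta_{2}k_{1}\le2$ are weakened consequences of $|b-\beta_{1}k_{2}|\le1$ and $|\alpha-b-\beta_{2}k_{1}|\le1$ together with $b\le1$. Thus (\ref{cond}) forces every $P_{ij,k}\ge0$, so $V$ is a QSO and in particular $V(S^{3})\subset S^{3}$.

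For necessity I would feed special points of $S^{3}$ into the original form (\ref{disc}) and demand the image stay in $S^{3}$. The vertices $e_{i}$ (the standard basis vectors) already produce the diagonal constraints: $V(e_{2})=(b,\,1-b-\alpha,\,\alpha,\,0)$ forces $\alpha+b\le1$, the behaviour of $y^{(1)}$ along the edge $e_{3}e_{4}$ near $e_{4}$ forces $b+\beta_{2}k_{2}\le1$, and $V(e_{3}),V(e_{4})$ force $b\le1$. The remaining two-sided bounds in (\ref{cond}) involve the interaction coefficients $\beta_{i}k_{j}$ and must be extracted from the edges carrying the corresponding cross terms, i.e. from $x^{(1)},u^{(1)}$ on $e_{1}e_{2}$ and from $y^{(1)},v^{(1)}$ on $e_{3}e_{4}$.

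This last point is where I expect the real difficulty. Along an edge $e_{i}e_{j}$ a coordinate has the form $P_{ii,k}s^{2}+2P_{ij,k}s(1-s)+P_{jj,k}(1-s)^{2}$, and its non-negativity on $[0,1]$ gives only $P_{ij,k}\ge-\tfrac12(P_{ii,k}+P_{jj,k})$, which is strictly weaker than $P_{ij,k}\ge0$ unless the two diagonal coefficients vanish. Hence the inequalities of (\ref{cond}) are forced at the level of the simplex only at those edges or corners where the relevant $P_{ii,k},P_{jj,k}$ are zero, and for the others one must argue — via the sign of the derivative at the endpoint, or by exhibiting an explicit simplex point on which a coordinate turns negative — that each violated inequality genuinely pushes an image out of $S^{3}$. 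Controlling this gap between ``non-negative on the simplex'' and ``non-negative coefficients'' is the crux of the necessity direction, and is the step that most deserves an explicit point-by-point verification rather than an appeal to the coefficient table alone.
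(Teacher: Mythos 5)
Your sum-preservation computation and your sufficiency argument are correct, and the sufficiency direction is in substance the paper's entire proof: the paper simply says to solve $0\le P_{ij,k}\le 1$ in (\ref{par}), which, since each row sum $\sum_k P_{ij,k}$ equals $1$ identically in the parameters, is the same as your coefficient-by-coefficient check. The genuine gap is the necessity direction: you correctly observe that invariance of $S^3$ only forces $P_{ij,k}\ge -\tfrac12\left(P_{ii,k}+P_{jj,k}\right)$ along each edge, you announce that this discrepancy must be closed by an explicit point-by-point argument, and then you stop without closing it. In fact this step cannot be closed, because the ``only if'' half of the Proposition is false as stated. Concretely, take $b=1$, $\alpha=0$, $\beta_2=k_2=0$, $\beta_1=2.5$, $k_1=1$: then (\ref{disc}) becomes $V(x,u,y,v)=(1-2.5ux,\;2.5ux,\;0,\;0)$, and since $ux\le\tfrac{1}{4}$ on $S^3$ we get $0\le 2.5ux\le\tfrac{5}{8}<1$, so $V(S^3)\subset S^3$; nevertheless $|b-\beta_1k_1|=1.5>1$, so (\ref{cond}) is violated. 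The phenomenon is not tied to the extreme value $b=1$: with $b=\tfrac14$, $\alpha=\beta_2=k_2=0$, $\beta_1k_1=1.4$, the worst case of $x^{(1)}$ on $S^3$ occurs at $u=1-x$ and equals $1.4x^2-0.65x+0.25$, whose discriminant is negative, so again $V(S^3)\subset S^3$ while $|b-\beta_1k_1|=1.15>1$.

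What is true, and what both your sufficiency computation and the paper's one-line proof actually establish, is the equivalence between (\ref{cond}) and the statement that every coefficient in the particular representation (\ref{par}) lies in $[0,1]$, i.e.\ that $V$ is a QSO with that coefficient table in the sense of (\ref{4}). Coefficient non-negativity is sufficient for $V(S^3)\subset S^3$ but, as the counterexample shows, not necessary, precisely because a quadratic form can be non-negative on the simplex without having non-negative coefficients in a prescribed representation --- the point you flagged as the crux. So your refusal to identify ``non-negative on the simplex'' with ``non-negative coefficients'' is vindicated; to obtain a complete and correct proof you must either weaken the first claim of the Proposition to ``$V$ is a QSO with coefficients (\ref{par}) if and only if (\ref{cond})'' (your coefficient check, read in both directions, then finishes the job), or retain only the ``if'' direction of the statement as written. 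The paper's own proof makes, silently, exactly the identification you balked at, and therefore it too only proves the ``if'' direction.
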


\begin{proof} The proof can be obtained  by using equalities (\ref{par}) and solving
inequalities $0\leq P_{ij,k}\leq1$ for each $P_{ij,k}$.
\end{proof}

\begin{rem}
In the sequel of the paper we consider operator (\ref{disc})
with parameters $b,\alpha, \beta_1, \beta_2, k_1, k_2$ which satisfy conditions
(\ref{cond}). This operator maps $S^{3}$ to itself and we are
interested to study the behaviour of the trajectory of any initial
point $\lambda\in S^{3}$ under iterations of the operator $V.$
\end{rem}

\section{Fixed points of the operator (\ref{disc})}

To find fixed points of operator $V$ given by (\ref{disc})
we have to solve $V(\lambda)=\lambda.$

\subsection{Finding fixed points of the operator (\ref{disc})}
Denote
\begin{itemize}
\item[] $\lambda_{1}=\left(1,0,0,0\right), \ \  \lambda_{2}=\left(0,0,0,1\right), \ \ \lambda_{3}=\left(0,0,1,0\right),\ \ \lambda_{4}=\left(0,1,0,0\right),$
\item[] $\Lambda_{5}=\{\lambda=(x,u,y,v)\in S^3: u=v=0\},$
\item[] $\Lambda_{6}=\{\lambda=(x,u,y,v)\in S^3: u=0\},$
\item[] $\Lambda_{7}=\{\lambda=(x,u,y,v)\in S^3: x=0\},$
\item[] $\Lambda_{8}=\{\lambda=(x,u,y,v)\in S^3: x=u=0\},$
\item[]  $\lambda_{9}=\left(\frac{b}{\beta_1 k_1},\frac{\beta_1 k_1-b}{\beta_1 k_1},0,0\right), \ \ \lambda_{10}=\left(\frac{b+\alpha}{\beta_1 k_1},\frac{b(\beta_1k_1-b-\alpha)}{\beta_1k_1(b+\alpha)},\frac{\alpha(\beta_1k_1-b-\alpha)}{\beta_1k_1(b+\alpha)},0 \right),$

\item[] $\lambda_{11}=\left(\frac{b}{b+\beta_1 A},\frac{b\beta_1A}{(b+\beta_1 A)(b+\alpha)},\frac{\alpha b\beta_1A}{(b+\beta_1 A)(b+\beta_2A)(b+\alpha)},\frac{\alpha \beta_1\beta_2 A^2}{(b+\beta_1 A)(b+\beta_2A)(b+\alpha)} \right),$

where  $A$ is a positive solution of the equation
\begin{equation}\label{fpc}
1=\frac{b\beta_1k_1}{(b+\beta_1 A)(b+\alpha)}+\frac{\alpha \beta_1\beta_2k_2A}{(b+\beta_1 A)(b+\beta_2A)(b+\alpha)}
\end{equation}
\end{itemize}

By the following proposition we give all possible fixed points
of the operator $V.$
\begin{prop}\label{fixp}
\label{fp} Let $Fix(V)$ be set of fixed points of the operator (\ref{disc}). Then

$$Fix(V)=\left\{\begin{array}{lll}
\{\lambda_1\} \ \ \\[2mm]
\{\lambda_1, \lambda_2, \lambda_3\}, \ \ {\rm if} \ \  b=0 \\[2mm]
\{\lambda_2, \lambda_4\}\bigcup\Lambda_{5}, \ \ \ \ {\rm if} \ \ b=\alpha=0 \\[2mm]
\Lambda_{6}, \ \ \ \ \ \ \ \ \ \ \  {\rm if} \ \ b=\beta_1=\beta_2=0 \\[2mm]
\{\lambda_1\}\bigcup\Lambda_{7}, \ \ \ \ {\rm if} \ \ b=\alpha=\beta_2=0, \beta_1>0 \\[2mm]
\{\lambda_1,\lambda_4\}\bigcup\Lambda_8, \ \ \ \ {\rm if} \ \ b=\beta_2=0, \beta_1>0, \alpha>0, k_1k_2>0 \\[2mm]
S^3,\ \ \ \ {\rm if} \ \ b=\alpha=k_1=k_2=0  \ \ {\rm or} \ \ b=\alpha=\beta_1=\beta_2=0\\[2mm]
\{\lambda_1, \lambda_{9} \}, \ \ \ \ {\rm if} \ \ b>0, \alpha=0, \beta_1k_1>b \\[2mm]
\{\lambda_1, \lambda_{10} \}, \ \ \ \ {\rm if} \ \ b>0,\, \alpha>0,\, \beta_2=0, \, \beta_1 k_1>b+\alpha\\[2mm]
\{\lambda_1, \lambda_{11} \}, \ \ \ \ {\rm if} \ \ \alpha b \beta_1\beta_2 k_1k_2>0
\end{array}\right.$$
\end{prop}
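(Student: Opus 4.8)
The plan is to write out $V(\lambda)=\lambda$ for the operator (\ref{disc}) and cancel the free $x,u,y,v$ terms on each line; this turns the fixed-point condition into the system
\[
b(1-x)=\beta_{1}Ax,\qquad (b+\alpha)u=\beta_{1}Ax,\qquad (b+\beta_{2}A)y=\alpha u,\qquad bv=\beta_{2}Ay,
\]
where $A=A(u,v)=k_{1}u+k_{2}v\ge 0$, together with $x,u,y,v\ge 0$. A useful preliminary remark is that summing the four equations gives $b(1-s)=0$ with $s=x+u+y+v$, so for $b>0$ every solution automatically satisfies $s=1$; thus the simplex constraint is free on that branch and only nonnegativity must be checked. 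The whole argument is organized around the scalar $A$, the force of infection: once its value is fixed the four equations form a cascade determining $x$, then $u$, then $y$, then $v$ as explicit rational expressions in $A$. The governing dichotomy is therefore $A=0$ (disease-free equilibria) versus $A>0$ (endemic equilibria), and within each branch I would split further according to which of $b,\alpha,\beta_{1},\beta_{2},k_{1},k_{2}$ vanish.

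First I would treat $A=0$. Then the infection terms drop out and the system collapses to $b(1-x)=0$, $(b+\alpha)u=0$, $bv=0$, with $k_{1}u+k_{2}v=0$. When $b>0$ this forces $x=1$ and $u=v=y=0$, giving the single point $\lambda_{1}=(1,0,0,0)$; note $\lambda_{1}$ is a fixed point for every admissible parameter choice. When parameters vanish, entire coordinate equations become the identity $0=0$ and the corresponding coordinates become free: this is exactly what produces the affine families $\Lambda_{5},\dots,\Lambda_{8}$, and in the most degenerate regimes $b=\alpha=k_{1}=k_{2}=0$ and $b=\alpha=\beta_{1}=\beta_{2}=0$ the whole simplex $S^{3}$. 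This bookkeeping is routine but must be carried out carefully, matching each vanishing pattern to the dimension of the resulting fixed set.

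Next I would treat $A>0$, which already forces $b>0$ since the last equation gives $v=\beta_{2}Ay/b$. Solving the cascade yields
\[
x=\frac{b}{b+\beta_{1}A},\qquad u=\frac{b\beta_{1}A}{(b+\beta_{1}A)(b+\alpha)},\qquad y=\frac{\alpha u}{\,b+\beta_{2}A\,},\qquad v=\frac{\beta_{2}Ay}{b},
\]
and substituting into the defining identity $A=k_{1}u+k_{2}v$ and dividing by $A>0$ produces exactly the consistency equation (\ref{fpc}). When $\beta_{2}=0$ (and also $y=0$ when $\alpha=0$) the tail of the cascade truncates to $v=0$, and (\ref{fpc}) degenerates to a linear equation for $A$ whose positive root, subject to $\beta_{1}k_{1}>b$ respectively $\beta_{1}k_{1}>b+\alpha$, gives $\lambda_{9}$ and $\lambda_{10}$; this is precisely where the threshold inequalities in those two cases originate. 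In the fully positive regime $\alpha b\beta_{1}\beta_{2}k_{1}k_{2}>0$ the consistency condition is the genuine rational equation (\ref{fpc}), and a positive solution $A$ furnishes $\lambda_{11}$.

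The main obstacle is this endemic branch. I would analyze (\ref{fpc}) through its right-hand side $f(A)$ on $[0,\infty)$: it is continuous, $f(0)=\beta_{1}k_{1}/(b+\alpha)$, and one checks $f(A)\to 0$ as $A\to\infty$, so a positive root of $f(A)=1$ can be produced by the intermediate value theorem once $f$ exceeds $1$ somewhere. The delicate points are (i) that $f$ need not be monotone — the second summand vanishes at both ends and bumps up in between — so characterizing \emph{when} a positive root exists (and hence when $\lambda_{11}$ actually occurs rather than only $\lambda_{1}$) requires real care, and (ii) verifying that the constructed $A$ lands in the admissible range so that all four coordinates of $\lambda_{9},\lambda_{10},\lambda_{11}$ are nonnegative and genuinely lie on $S^{3}$, not merely satisfy the algebra. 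The remaining effort is the exhaustive, mutually exclusive enumeration of the degenerate parameter regimes, checking that each listed case captures every fixed point — including the continua — and introduces no spurious ones; the aggregate identity $b(1-s)=0$ is the one computation I would record in full, since it removes the simplex constraint from the $b>0$ cases outright.
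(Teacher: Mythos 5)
Your reduction of $V(\lambda)=\lambda$ to the system
\[
b(1-x)=\beta_1Ax,\qquad (b+\alpha)u=\beta_1Ax,\qquad (b+\beta_2A)y=\alpha u,\qquad bv=\beta_2Ay,
\]
and your treatment of the endemic branch with $b>0$, $A>0$ — the cascade $x=\frac{b}{b+\beta_1A}$, then $u$, $y$, $v$, then dividing $A=k_1u+k_2v$ by $A$ to obtain (\ref{fpc}), with the specializations $\alpha=0$ and $\beta_2=0$ producing $\lambda_{9}$ and $\lambda_{10}$ together with the thresholds $\beta_1k_1>b$ and $\beta_1k_1>b+\alpha$ — is exactly the paper's argument, carried out more carefully than in the paper itself (which even contains the slip $x=b/(b+\alpha)$).

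There is, however, a genuine error in your case decomposition: the claim that ``$A>0$ already forces $b>0$ since the last equation gives $v=\beta_2Ay/b$'' is false, because dividing by $b$ presupposes $b\neq0$. When $b=0$ the fourth equation reads $\beta_2Ay=0$, which is entirely compatible with $A>0$, and this regime $A>0$, $b=0$ is precisely where most of the degenerate families in the statement live. For instance, with $b=\alpha=\beta_2=0$, $\beta_1>0$, any point $(0,u,y,v)$ with $k_1u+k_2v>0$ solves all four equations (the first forces $x=0$ since $\beta_1A>0$; the others reduce to $0=0$): this is the $A>0$ part of $\Lambda_7$. The same happens for $\Lambda_6$ (case $b=\beta_1=\beta_2=0$, points with $k_2v>0$), for $\Lambda_8$ (case $b=\beta_2=0$, $\beta_1>0$, $\alpha>0$), for the case $Fix(V)=S^3$ with $b=\alpha=\beta_1=\beta_2=0$, and even for the vertex $\lambda_2$ when $b=0$ and $k_2>0$. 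None of these points is produced by your $A=0$ branch either, since that branch carries the defining constraint $k_1u+k_2v=0$; so your assertion that the $A=0$ bookkeeping ``produces the affine families $\Lambda_5,\dots,\Lambda_8$'' is correct only for $\Lambda_5$. As organized, your two branches jointly miss every fixed point with $A>0$ and $b=0$, so the enumeration is incomplete. The repair is mechanical: inside the $A>0$ branch split on $b$; for $b=0$ the system becomes $\beta_1Ax=0$, $\alpha u=\beta_1Ax$, $\beta_2Ay=\alpha u$, $\beta_2Ay=0$, i.e. $\beta_1x=0$, $\alpha u=0$, $\beta_2y=0$ together with $k_1u+k_2v>0$, and running through the vanishing patterns of $\beta_1,\alpha,\beta_2$ yields exactly the missing continua.
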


\begin{proof}
Recall that a fixed point of the operator $V$ is a solution
of $V(\lambda)=\lambda.$ From this straightforward $\lambda_i, i=\overline{1,8}.$

For the other cases we assume that $b>0.$\\
 Now we find $\lambda_{9}.$ If $y=v=0, \alpha=0$, then by (\ref{disc}) we have $y^{(1)}=0, v^{(1)}=0,$  and $A(u,v)=k_1 u.$  Using this and $u^{(1)}=u$ we obtain $u=\frac{\beta_1 k_1-b}{\beta_1 k_1}.$ By substituting  them to $x^{(1)}=x$ we get $x=\frac{b}{\beta_1 k_1}.$ Of course, for positiveness of $u$ it requests that $\beta_1 k_1>b>0.$ Similarly, by the conditions to parameters we can find easily the next fixed point $\lambda_{10}.$

For the interior fixed point $\lambda_{11}$ we request that all parameters are positive. First, using $x^{(1)}=x$ we have $x=\frac{b}{b+\alpha},$ from this and by $u^{(1)}=u$ we get $u=\frac{\beta_1Ax}{b+\alpha}=\frac{b\beta_1A}{(b+\beta_1A)(b+\alpha)}.$ Similarly, by $y^{(1)}=y$ we have $y=\frac{\alpha u}{b+\beta_2A}=\frac{\alpha b\beta_1A}{(b+\beta_1A)(b+\beta_2A)(b+\alpha)},$ and from $v^{(1)}=v$ we get $v=\frac{\beta_2Ay}{b}=\frac{\alpha \beta_1\beta_2A^2}{(b+\beta_1A)(b+\beta_2A)(b+\alpha)}.$ In this case from $A(u,v)=k_1u+k_2v$ we obtain the quadratical equation (\ref{fpc}). Thus, Proposition is proved.
\end{proof}

Note that the set of positive solutions of (\ref{fpc}) is non-empty when $\beta_1k_1\geq b+\alpha$ (see the statements after Conjecture 2). For example,  $\alpha=0.3, b=0.2, \beta_1=0.6, \beta_2=0.4, k_1=k_2=1.$ Then the equation (\ref{fpc}) has the form
$$30A^2-5A-1=0$$ 
and the positive solution is $A=\frac{5+\sqrt{145}}{60}\approx0.284.$

\subsection{Type of the fixed point $\lambda_1$}
\begin{defn}
\cite{De}. A fixed point $p$ for $F:\mathbb{R}^{m}\rightarrow\mathbb{R}^{m}$
is called \emph{hyperbolic} if the Jacobian matrix $\textbf{J}=\textbf{J}_{F}$
of the map $F$ at the point $p$ has no eigenvalues on the unit
circle.

There are three types of hyperbolic fixed points:

(1) $p$ is an attracting fixed point if all of the eigenvalues
of $\textbf{J}(p)$ are less than one in absolute value.

(2) $p$ is an repelling fixed point if all of the eigenvalues
of $\textbf{J}(p)$ are greater than one in absolute value.

(3) $p$ is a saddle point otherwise.
\end{defn}

\begin{prop}\label{fp1} Let $\lambda_1$ be the fixed point of the operator $V.$ Then
$$\lambda_{1}=\left\{\begin{array}{lll}
{\rm nonhyperbolic}, \ \ {\rm if} \ \  b=0 \ \ {\rm or} \ \ \beta_1k_1=b+\alpha\\[2mm]
{\rm attractive}, \ \ \ \ {\rm if} \ \ b>0 \ \ {\rm and} \ \ \beta_1k_1<b+\alpha\\[2mm]
{\rm saddle}, \ \ \  \ \  {\rm if}  \ \ b>0 \ \ {\rm and} \ \ \beta_1k_1>b+\alpha
\end{array}\right.$$

\end{prop}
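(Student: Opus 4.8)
The plan is to classify the fixed point $\lambda_1=(1,0,0,0)$ by computing the Jacobian matrix of $V$ at $\lambda_1$ and analysing its eigenvalues according to the hyperbolicity definition just stated. Since $V$ is given explicitly in the form (\ref{disc}) with $A(u,v)=k_1u+k_2v$, I would first write down the $4\times 4$ Jacobian $\mathbf{J}(x,u,y,v)$ by differentiating each of the four coordinate functions with respect to $x,u,y,v$. The only nonlinearity comes from the products $A(u,v)x$ and $A(u,v)y$, so the partial derivatives are elementary; for instance $\partial x^{(1)}/\partial x = 1+b-\beta_1(k_1u+k_2v)$ and $\partial x^{(1)}/\partial u = -\beta_1 k_1 x$, and similarly for the other entries.

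Next I would evaluate $\mathbf{J}$ at $\lambda_1$, i.e. set $x=1$, $u=y=v=0$. Because $A(0,0)=0$, most of the nonlinear contributions vanish and the matrix becomes sparse and close to upper/lower triangular. I expect the evaluated Jacobian to have a convenient block or triangular structure: the derivatives of $y^{(1)}$ and $v^{(1)}$ with respect to $x$ vanish at $\lambda_1$, and the susceptible equation decouples enough that the eigenvalues can be read off. The key computation is to identify the four diagonal-type eigenvalues. I anticipate obtaining eigenvalues such as $1-b$ (appearing for the $x$ and possibly $y,v$ directions) and the critical eigenvalue $1-b-\alpha+\beta_1 k_1$ coming from the $u^{(1)}$ equation, whose derivative with respect to $u$ at $\lambda_1$ is $1-b-\alpha+\beta_1 k_1 x = 1-b-\alpha+\beta_1 k_1$.

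With the eigenvalues in hand, the classification reduces to comparing each with the unit circle. The eigenvalue $1-b$ satisfies $|1-b|<1$ exactly when $0<b\le 1$ (and equals $1$ when $b=0$, explaining the nonhyperbolic case there). The decisive eigenvalue is $\mu:=1-b-\alpha+\beta_1 k_1$: here $|\mu|<1$ is equivalent (given the constraint $\alpha+b\le 1$ from (\ref{cond})) to $\beta_1 k_1<b+\alpha$, $|\mu|=1$ to $\beta_1 k_1=b+\alpha$, and $\mu>1$ to $\beta_1 k_1>b+\alpha$, which yields the saddle case. So I would organise the proof into the three declared regimes, checking in each that all eigenvalues lie strictly inside (attractive), on (nonhyperbolic), or — in the saddle case — that at least one lies outside while the rest stay inside the unit circle.

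The main obstacle is not any single derivative but the careful verification that the remaining eigenvalues genuinely satisfy $|\cdot|<1$ under the full parameter constraints (\ref{cond}), so that the single eigenvalue $\mu$ really controls the dichotomy. In particular, in the saddle regime I must confirm that crossing $\beta_1 k_1 = b+\alpha$ pushes only $\mu$ outside the unit circle and that no other eigenvalue simultaneously leaves or lands on it; this requires using $\alpha+b\le 1$ and the sign conventions on the parameters to pin down $1-b\in(0,1]$ and to rule out spurious boundary cases. Edge cases where several eigenvalues simultaneously hit the unit circle (e.g. $b=0$ together with $\beta_1 k_1=b+\alpha$) must be folded into the nonhyperbolic branch, which the stated trichotomy already anticipates via the disjunction in the first line.
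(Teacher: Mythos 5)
Your proposal is correct and follows essentially the same route as the paper: compute the Jacobian of (\ref{disc}), evaluate it at $\lambda_1=(1,0,0,0)$, read off the eigenvalues $1-b$ (multiplicity three) and $\mu_2=1-b-\alpha+\beta_1k_1$, and classify by comparing them with $1$ using the constraints (\ref{cond}). The only blemish is the illustrative partial derivative, which should read $\partial x^{(1)}/\partial x=1-b-\beta_1(k_1u+k_2v)$ rather than $1+b-\beta_1(k_1u+k_2v)$; this slip does not propagate, since the eigenvalues you actually use in the classification are the correct ones.
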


\begin{proof}
The Jacobian
of the operator (\ref{disc}) is:

\[ J=
\left[\begin{array}{cccccc}
1-b-\beta_1A & -\beta_1k_1x &0 & -\beta_1k_2x\\
\beta_1A & 1-b-\alpha+\beta_1k_1x & 0 & \beta_1k_2x \\
0 & \alpha-\beta_2k_1y & 1-b-\beta_2A& -\beta_2k_2y \\
0 & \beta_2k_1y & \beta_2A& 1-b+\beta_2k_2y
\end{array}\right]
\]

Then at the fixed point $\lambda_1$ the Jacobian is
\[ J(\lambda_1)=
\left[\begin{array}{cccccc}
1-b & -\beta_1k_1 &0 & -\beta_1k_2\\
0 & 1-b-\alpha+\beta_1k_1 & 0 & \beta_1k_2 \\
0 & \alpha & 1-b& 0 \\
0 & 0 & 0& 1-b
\end{array}\right]
\]
and the eigenvalues of this matrix are $\mu_1=1-b, \mu_2=1-b-\alpha+\beta_1k_1.$ By the conditions (\ref{cond}) we have $\mu_1\geq0, \mu_2\geq0.$ It is easy to se that if $b=0$ or $\beta_1k_1=b+\alpha$ then $\mu_1=1$ or $\mu_2=1$ respectively, if $b>0, \beta_1k_1<b+\alpha$ then the fixed point $\lambda_1$ is an attracting, otherwise saddle point.
\end{proof}
\begin{rem} The type of other fixed points is not studied and the type of fixed point $\lambda_1$ will be useful for some results in below.
\end{rem}
\section{The limit points of trajectories}

In this section we study the limit behavior of trajectories of initial point $\lambda^{(0)}\in S^3$ under
 operator (\ref{disc}), i.e the sequence $V^n(\lambda^{(0)})$, $n\geq 1$. Note that since $V$ is a continuous operator,   its trajectories have as a limit some fixed points obtained in Proposition \ref{fp}.

\subsection{Case no susceptibility of persons ($\beta_1=\beta_2=0$).}

We study here the case where in the model there is no susceptibility of persons.
\begin{prop}\label{prop1}
For an initial point $\lambda^{0}=\left(x^{0},u^{0},y^{0},v^{0}\right)\in S^{3}$
(except fixed points)the trajectory (under action of operator (\ref{disc})) has the following limit
\[
\lim_{n\to\infty}V^{(n)}(\lambda^{0})=\begin{cases}
\lambda^{0}  & \text{if } \alpha=b=0\\
(x^{0},0,1-x^0-v^{0}, v^0) & \text{if } b=0, \alpha>0\\
\lambda_{1} & \text{if } b>0\\
\end{cases}
\]
\end{prop}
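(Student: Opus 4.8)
The plan is to exploit the fact that putting $\beta_1=\beta_2=0$ annihilates every term containing the force of infection $A(u,v)$, so that operator (\ref{disc}) collapses to the affine, lower-triangular system
\[
x^{(1)}=(1-b)x+b,\quad u^{(1)}=(1-b-\alpha)u,\quad y^{(1)}=(1-b)y+\alpha u,\quad v^{(1)}=(1-b)v .
\]
From Proposition \ref{pc} the surviving constraints in (\ref{cond}) are just $0\le b\le1$, $\alpha\ge0$ and $\alpha+b\le1$, so both multipliers satisfy $1-b\in[0,1]$ and $1-b-\alpha\in[0,1]$, with $1-b<1\iff b>0$ and $1-b-\alpha<1\iff\alpha+b>0$. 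This sign bookkeeping is precisely what separates the three regimes.

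The coordinates can then be iterated explicitly in cascade. The variables $u$ and $v$ are pure geometric sequences, $u^{(n)}=(1-b-\alpha)^n u^0$ and $v^{(n)}=(1-b)^n v^0$; the variable $x$ obeys a scalar affine recursion whose unique fixed point is $1$, giving $x^{(n)}=(1-b)^n x^0+b\sum_{k=0}^{n-1}(1-b)^k$; and $y$ satisfies $y^{(n+1)}=(1-b)y^{(n)}+\alpha u^{(n)}$, a geometric recursion driven by $u$, solved by the convolution $y^{(n)}=(1-b)^n y^0+\alpha u^0\sum_{k=0}^{n-1}(1-b)^{\,n-1-k}(1-b-\alpha)^k$. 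I would then read off the three cases. If $b=\alpha=0$ every multiplier equals $1$ and $V$ is the identity, so the limit is $\lambda^0$. If $b=0,\ \alpha>0$, then $x$ and $v$ are frozen while $(1-\alpha)^n\to0$ kills $u$, and the accumulated sum gives $y^{(n)}\to y^0+\alpha u^0\sum_{k\ge0}(1-\alpha)^k=y^0+u^0$, which equals $1-x^0-v^0$ by the simplex relation, matching the stated limit. If $b>0$ then $1-b<1$ forces $v^{(n)}\to0$ and $x^{(n)}\to1$, while $u^{(n)}\to0$; the cleanest way to conclude $y^{(n)}\to0$ is to invoke conservation of the simplex sum (valid since $V(S^3)\subset S^3$), writing $y^{(n)}=1-x^{(n)}-u^{(n)}-v^{(n)}\to0$, so the trajectory tends to $\lambda_1=(1,0,0,0)$.

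The computation is essentially routine once linearity is noticed; the only points needing care are the degenerate boundary values of the multipliers. When $b=1$ or $\alpha=1$ a ratio drops to $0$, so the relevant sequence reaches its limit after one step rather than asymptotically, and when $\alpha=0$ the two geometric ratios coincide, forcing the convolution to be written as $\alpha u^0\, n(1-b)^{n-1}$ rather than $\alpha u^0\,\tfrac{(1-b)^n-(1-b-\alpha)^n}{\alpha}$. In each such sub-case the limit is unchanged, but I would verify them explicitly; recognising $y^0+u^0=1-x^0-v^0$ is the one cosmetic step needed to bring the $b=0,\ \alpha>0$ answer into the form displayed in the statement.
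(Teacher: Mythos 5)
Your proposal is correct and follows essentially the same route as the paper: reduce to the affine triangular system when $\beta_1=\beta_2=0$, observe that $u^{(n)}$ and $v^{(n)}$ are geometric, and finish each case using the simplex relation $x^{(n)}+u^{(n)}+y^{(n)}+v^{(n)}=1$. The only difference is cosmetic: you solve the recursions in closed form (including the convolution formula for $y^{(n)}$), whereas the paper gets the same conclusions via monotonicity of $y^{(n)}$ and the simplex constraint, so both arguments are equivalent in substance.
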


\begin{proof}
If $\beta_1=\beta_2=0$ then the operator (\ref{disc}) has the following form:

\begin{equation}
V:\left\{ \begin{alignedat}{1}x^{(1)} & =x+b-bx\\
u^{(1)} & =u(1-b-\alpha)\\
y^{(1)} & =y-by+\alpha u \\
v^{(1)} & =v(1-b)
\end{alignedat}
\right.\label{case1}
\end{equation}
If $b=\alpha=0$ then every point is fixed point, so this case is clear. If $b=0, \alpha>0$ then by (\ref{case1}) we get $x^{(n)}=x^0, v^{(n)}=v^0$ and $u^{(n)}=u^0(1-\alpha)^n\rightarrow0.$ Moreover, $y^{(1)}=y+\alpha u\geq y,$ so the sequence $y^{(n)}$ has a limit.
From $x^{(n)}+u^{(n)}+y^{(n)}+v^{(n)}=1$ it follows the proof of this case. If $b>0$ then the sequences $u^{(n)}, v^{(n)}$  have zero limits. In addition, from $x^{(n+1)} =x^{(n)}+b(1-x^{(n)})$ one obtains that limit of the sequence $x^{(n)}$ is 1 (since $b>0$). Thus, the Proposition is proved.
\end{proof}
\subsection{Case no susceptibility of persons in $S$ ($\beta_1=0, \beta_2>0$).}
\begin{prop}
For an initial point $\lambda^{0}=\left(x^{0},u^{0},y^{0},v^{0}\right)\in S^{3}$
(except fixed points) the trajectory (under action of the operator (\ref{disc})) has the following limit
\[
\lim_{n\to\infty}V^{(n)}(\lambda^{0})=\begin{cases}
(x^{0},u^{0},0, 1-x^0-u^0)  & \text{if } \alpha=b=0\\
\lambda_{1} & \text{if } b>0, \alpha=0\\
(x^{0},0,\bar{y}, 1-x^0-\bar{y}) & \text{if } b=0, \alpha>0, k_2=0\\
(x^{0},0,0, 1-x^0) & \text{if } b=0, \alpha>0, k_2>0\\
\lambda_{1} & \text{if } b>0, \alpha>0\\
\end{cases}
\]
where $\bar{y}=\bar{y}(\lambda^0)$
\end{prop}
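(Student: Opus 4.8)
The plan is to exploit the decoupling that occurs when $\beta_1=0$. Setting $\beta_1=0$ in (\ref{disc}), the first two coordinates evolve autonomously,
\[x^{(n+1)}=x^{(n)}+b\bigl(1-x^{(n)}\bigr),\qquad u^{(n+1)}=(1-b-\alpha)\,u^{(n)},\]
so that $1-x^{(n)}=(1-b)^{n}(1-x^{0})$ and $u^{(n)}=(1-b-\alpha)^{n}u^{0}$. By (\ref{cond}) we have $0\le 1-b\le 1$ and $0\le 1-b-\alpha\le 1$, hence $x^{(n)}\to 1$ when $b>0$ (and $x^{(n)}\equiv x^{0}$ when $b=0$), while $u^{(n)}\to 0$ whenever $b+\alpha>0$ (and $u^{(n)}\equiv u^{0}$ when $b=\alpha=0$). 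These elementary limits are the backbone of the argument.

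First I would dispose of the two rows with $b>0$. There $x^{(n)}\to 1$ and $u^{(n)}\to 0$, and since $x^{(n)}+u^{(n)}+y^{(n)}+v^{(n)}=1$ with $y^{(n)},v^{(n)}\ge 0$, both $y^{(n)}$ and $v^{(n)}$ are squeezed to $0$; the limit is $\lambda_{1}$. This settles the rows $b>0,\alpha=0$ and $b>0,\alpha>0$ simultaneously, with no need to analyse the coupled $(y,v)$ block.

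The substance lies in the case $b=0$, where $x^{(n)}\equiv x^{0}$ and the pair $(y,v)$ is governed by
\[y^{(n+1)}=y^{(n)}+\alpha u^{(n)}-\beta_{2}A^{(n)}y^{(n)},\qquad v^{(n+1)}=v^{(n)}+\beta_{2}A^{(n)}y^{(n)},\]
with $A^{(n)}=k_{1}u^{(n)}+k_{2}v^{(n)}$ and the running constraint $y^{(n)}+v^{(n)}=1-x^{0}-u^{(n)}$. The key monotonicity observation is $v^{(n+1)}-v^{(n)}=\beta_{2}A^{(n)}y^{(n)}\ge 0$, so $v^{(n)}$ is nondecreasing and bounded above by $1-x^{0}$; hence $v^{(n)}\to\bar v$, and then $y^{(n)}=1-x^{0}-u^{(n)}-v^{(n)}\to\bar y:=1-x^{0}-u^{\infty}-\bar v$, where $u^{\infty}=u^{0}$ if $\alpha=0$ and $u^{\infty}=0$ if $\alpha>0$. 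Passing to the limit in the $v$-recursion gives $\beta_{2}\bar A\,\bar y=0$ with $\bar A=k_{1}u^{\infty}+k_{2}\bar v$. When $\alpha>0$ and $k_{2}=0$ the force $A^{(n)}=k_{1}u^{(n)}\to 0$ imposes no further constraint (the added increments $\beta_{2}k_{1}u^{(n)}y^{(n)}$ form a summable series because $u^{(n)}$ decays geometrically), so $\bar y$ is the initial-data–dependent value $\bar y(\lambda^{0})$, matching the stated third row.

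The hard part will be the two subcases that require $\bar y=0$, namely $\alpha=b=0$ (row one) and $b=0,\alpha>0,k_{2}>0$ (row four). In both, the limit equation only yields $\beta_{2}\bar A\,\bar y=0$, i.e. a product vanishing, so one must exclude the spurious branch $\bar A=0,\ \bar y>0$ (equivalently $\bar v=0$ when $k_{2}>0$). Here I would argue that $\bar v=0$ together with the monotonicity of $v^{(n)}$ forces $v^{(n)}\equiv 0$; feeding this back into the recursion (so that every new-infection term $\beta_{2}A^{(n)}y^{(n)}$ must vanish) pins the orbit down to a constant sequence, contradicting the hypothesis that $\lambda^{0}$ is not a fixed point. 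This elimination of the wrong branch, rather than the convergence itself, is the delicate step, and it is exactly where the splitting into $k_{2}=0$ versus $k_{2}>0$ and the exclusion of fixed points are genuinely used; the only residual care is for degenerate initial configurations such as $k_{1}=0,\ v^{0}=0$, which must be treated as boundary cases.
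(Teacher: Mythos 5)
Your treatment of the rows with $b>0$ (the explicit solution $1-x^{(n)}=(1-b)^{n}(1-x^{0})\to 0$ plus the simplex squeeze) and of the rows $\alpha=b=0$ and $b=0,\alpha>0,k_{2}=0$ is correct, and it follows essentially the same route as the paper: monotonicity of $v^{(n)}$, hence convergence of all coordinates, then passing to the limit in the $v$-recursion to obtain $\beta_{2}\bar{A}\bar{y}=0$. In the row $\alpha=b=0$ your exclusion of the branch $\bar{A}=0$ is sound, because there $A^{(n)}=k_{1}u^{0}+k_{2}v^{(n)}$ is nondecreasing, so $\bar{A}=0$ forces $A^{(0)}=0$, and then the orbit really is constant, i.e. $\lambda^{0}$ is a fixed point, which is excluded.

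The gap is in the fourth row, $b=0$, $\alpha>0$, $k_{2}>0$. There, ``every new-infection term $\beta_{2}A^{(n)}y^{(n)}$ vanishes'' does \emph{not} pin the orbit down to a constant sequence: the recovery term $\alpha u^{(n)}$ still moves mass from $u$ to $y$, so the orbit can be non-constant while $v^{(n)}\equiv 0$. Concretely, take $k_{1}=0$, $k_{2}>0$, $\beta_{2}>0$, $v^{0}=0$, $u^{0}>0$ (compatible with (\ref{cond})). Then $A^{(n)}=k_{2}v^{(n)}$, so by induction $v^{(n)}\equiv 0$, while $u^{(n)}=(1-\alpha)^{n}u^{0}\to 0$ and $y^{(n+1)}=y^{(n)}+\alpha u^{(n)}\to y^{0}+u^{0}$; the initial point is not fixed, yet the limit is $(x^{0},0,1-x^{0},0)$ and not $(x^{0},0,0,1-x^{0})$. (A similar failure occurs for $k_{1}>0$ when $\alpha=1$ and $y^{0}=v^{0}=0$.) So the branch you call spurious is genuinely realized and cannot be eliminated by any argument: for such initial data the fourth row of the statement itself fails, and a correct proof needs an extra hypothesis guaranteeing $v^{(n_{0})}>0$ for some $n_{0}$ (e.g. $v^{0}>0$, or $k_{1}u^{0}y^{0}>0$), after which monotonicity gives $\bar{v}>0$ and hence $\bar{y}=0$. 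For what it is worth, the paper's own proof conceals the same issue: it asserts $\bar{v}\neq 0$ via the parenthetical ``Note that $A(u^{0},v^{0})\neq 0$'', but once $\alpha>0$ this does not follow from $\lambda^{0}$ not being a fixed point. What you describe as ``residual care for degenerate initial configurations such as $k_{1}=0$, $v^{0}=0$'' is therefore not a boundary nuisance but exactly the point where the claim, and with it your contradiction argument, breaks down.
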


\begin{proof}
If $\beta_1=0$ then the operator (\ref{disc}) is

\begin{equation}
V:\left\{ \begin{alignedat}{1}x^{(1)} & =x+b(1-x)\\
u^{(1)} & =u(1-b-\alpha)\\
y^{(1)} & =y-by+\alpha u-\beta_2(k_1u+k_2v)y \\
v^{(1)} & =v-bv+\beta_2(k_1u+k_2v)y
\end{alignedat}
\right.\label{case2}
\end{equation}
\textbf{Case}: $b=\alpha=0.$  We assume that $A(u^0,v^0)\neq0,$ otherwise, $A(u^{(n)},v^{(n)})=0, \forall n\in N,$ and limit point of the operator (\ref{case2}) is initial point $\lambda^0.$ The proof of this case is coincides with second case of Proposition \ref{prop3}.\\
\textbf{Case:} $b>0, \alpha=0.$ In this case $u^{(n)}=u^0(1-b)^n$ has zero limit, and we have
$$x^{(1)}=x+b(1-x)\geq x, \ \ y^{(1)} =y-by-\beta_2(k_1u+k_2v)y\leq y,$$ so the sequences $x^{(n)},y^{(n)}$ have limits and consequently, $v^{(n)}$ also has limit. Let $\bar{x}$ be a limit of $x^{(n)}.$ Then from $x^{(n+1)}=x^{(n)}+b(1-x^{(n)})$ we get limit and it follows that $\bar{x}=1$ since $b>0.$ Thus, limit of the considering operator is $\lambda_1=(1,0,0,0).$\\
\textbf{Case:} $b=0, \alpha>0, k_2=0.$  Then $x^{(n)}=x^0, u^{(n)}=u^0(1-\alpha)^n\rightarrow0$  and $v^{(1)}=v+\beta_2k_1uy\geq v,$ i.e., the sequence $v^{(n)}$ has limit, so $y^{(n)}$ also has limit. But limits of  $y^{(n)}$ and  $v^{(n)}$ depend on initial point $\lambda^0.$\\
\textbf{Case:} $b=0, \alpha>0, k_2>0.$ Here also, as previous case, $x^{(n)}=x^0, u^{(n)}=u^0(1-\alpha)^n\rightarrow0$ and the sequences $y^{(n)}, v^{(n)}$ have limits. Let $\bar{y}, \bar{v}$ be limits of $y^{(n)}$ and $v^{(n)}$ respectively. If we take limit from both side of the following equality
$$v^{(n+1)}=v^{(n)}+\beta_2(k_1u^{(n)}+k_2v^{(n)})y^{(n)}$$ then we have $\beta_2\bar{v}\bar{y}=0,$ i.e., $\bar{y}=0,$ because,  $v^{(n)}$ increasing sequence, so $\bar{v}\neq 0$ (Note that $A(u^0,v^0)\neq0$). Thus, $\bar{v}=1-x^0.$\\
\textbf{Case:} $b>0, \alpha>0.$ Then $u^{(n)}=u^0(1-b-\alpha)^n\rightarrow0,$ and
$x^{(1)}=x+b(1-x)\geq x\geq x,$ i.e., the sequence $x^{(n)}$ has limit $\bar{x}$. From $x^{(n+1)}=x^{(n)}+b(1-x^{(n)})$ we get limit and it obtains that $\bar{x}=1.$ Moreover, from the $x^{(n)}+u^{(n)}+y^{(n)}+ v^{(n)}=1$ we have that  $y^{(n)}+v^{(n)}\rightarrow0.$  In addition, every terms of the both sequences are non-negative, so the limits of the sequences $y^{(n)}$ and $v^{(n)}$ exist and zero. Thus, the proof of the Proposition is completed.
\end{proof}

\subsection{Case no birth (death) rate and recovery rate ($b=\alpha=0$).}
\begin{prop}\label{prop3}
For an initial point $\lambda^{0}=\left(x^{0},u^{0},y^{0},v^{0}\right)\in S^{3}$
(except fixed points) the trajectory (under action of the operator (\ref{disc})) has the following limit
\[
\lim_{n\to\infty}V^{(n)}(\lambda^{0})=\begin{cases}
\lambda^{0}  & \text{if } k_1=k_2=0\\
(x^{0},u^{0},0, 1-x^0-u^0) & \text{if } \beta_1=0, \beta_2>0, k_1+k_2>0\\
(0,1-y^0-v^0,y^0,v^0) & \text{if }  \beta_1>0, \beta_2=0, k_1+k_2>0\\
(0,u^0,0,1-u^0) & \text{if } \beta_1>0, \beta_2>0, k_1k_2>0\\
\end{cases}
\]
\end{prop}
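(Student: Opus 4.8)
The plan is to specialise (\ref{disc}) to $b=\alpha=0$, where it reduces to
\[
x^{(1)}=x\bigl(1-\beta_1A\bigr),\qquad u^{(1)}=u+\beta_1Ax,
\]
\[
y^{(1)}=y\bigl(1-\beta_2A\bigr),\qquad v^{(1)}=v+\beta_2Ay,
\]
with $A=A(u,v)=k_1u+k_2v\ge0$. The first case $k_1=k_2=0$ is immediate, since then $A\equiv0$, the operator $V$ is the identity, and every point is fixed. For the three remaining cases I would rely on two structural observations. Adding the first two, respectively the last two, equations shows that the sums $x^{(n)}+u^{(n)}$ and $y^{(n)}+v^{(n)}$ are \emph{invariant} along every trajectory. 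Moreover, conditions (\ref{cond}) with $b=\alpha=0$ force $\beta_1k_1,\beta_1k_2,\beta_2k_1,\beta_2k_2\le1$, whence $\beta_iA\le u+v\le1$ on $S^3$; thus each factor $1-\beta_iA$ lies in $[0,1]$, and $x^{(n)},y^{(n)}$ are non-increasing while $u^{(n)},v^{(n)}$ are non-decreasing whenever the relevant $\beta_i$ is positive.

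Being monotone and confined to $[0,1]$, all four sequences converge, say to $\bar x,\bar u,\bar y,\bar v$. Passing to the limit in the recursions yields $\beta_1A(\bar u,\bar v)\bar x=0$ and $\beta_2A(\bar u,\bar v)\bar y=0$. Once the limiting force of infection is known to be positive, $A(\bar u,\bar v)>0$, these identities force $\bar x=0$ whenever $\beta_1>0$ and $\bar y=0$ whenever $\beta_2>0$. Feeding this into the invariants $x^{(n)}+u^{(n)}\equiv x^0+u^0$ and $y^{(n)}+v^{(n)}\equiv y^0+v^0$ then pins down the surviving coordinates. In the one-sided case $\beta_1=0,\beta_2>0$ the pair $(x,u)$ is frozen and only $(y,v)$ evolves, giving the limit $(x^0,u^0,0,y^0+v^0)$; the symmetric case $\beta_1>0,\beta_2=0$ gives $(0,x^0+u^0,y^0,v^0)$; and in the fully interacting case $\beta_1,\beta_2>0,\ k_1k_2>0$ both pairs collapse to $(0,x^0+u^0,0,y^0+v^0)$. (Note $y^0+v^0=1-x^0-u^0$ and $x^0+u^0=1-y^0-v^0$, matching the displayed formulas.)

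The crux, and the only genuinely delicate step, is to prove $A(\bar u,\bar v)>0$; here the hypothesis that $\lambda^0$ is not a fixed point is essential, since for these parameters a point with $A(u^0,v^0)=0$ is automatically fixed, so a non-fixed initial point satisfies $A(u^0,v^0)>0$. When $k_1k_2>0$, a vanishing limit $A(\bar u,\bar v)=k_1\bar u+k_2\bar v=0$ would force $\bar u=\bar v=0$; but $u^{(n)},v^{(n)}$ are non-negative and non-decreasing, so they would be identically zero, making $A\equiv0$ and $\lambda^0$ a fixed point, a contradiction. When one of $k_1,k_2$ vanishes (permitted in the one-sided cases, where only $k_1+k_2>0$ is assumed) the same conclusion needs a short split on which of $k_1,k_2$ survives, combined with the fact that the frozen coordinates keep their initial values; in each sub-case $A(\bar u,\bar v)=0$ collapses back to $A(u^0,v^0)=0$, i.e.\ to a fixed point, which is excluded. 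I expect the monotonicity and conservation bookkeeping to be routine, with essentially all the content concentrated in this non-degeneracy step.
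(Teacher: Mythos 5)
Your argument follows essentially the same route as the paper's: specialise to $b=\alpha=0$, use monotonicity of the four coordinate sequences to get convergence, observe that $A^{(n)}=k_1u^{(n)}+k_2v^{(n)}$ is non-decreasing so that a non-fixed initial point (equivalently $A(u^0,v^0)>0$) forces $\lim_{n\to\infty} A^{(n)}>0$, and pass to the limit in the recursions to kill $\bar x$ (when $\beta_1>0$) and $\bar y$ (when $\beta_2>0$). The paper does exactly this, coordinate by coordinate; your explicit use of the invariance of $x^{(n)}+u^{(n)}$ and $y^{(n)}+v^{(n)}$ is a slightly cleaner way of doing the final bookkeeping than the paper's appeal to frozen coordinates and $x+u+y+v=1$.

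However, your closing claim --- that your answers ``match the displayed formulas'' --- is false in the fourth case, and this deserves to be flagged rather than glossed over. There you derive (correctly) the limit $(0,\,x^0+u^0,\,0,\,y^0+v^0)$, while the proposition displays $(0,\,u^0,\,0,\,1-u^0)$; these agree only when $x^0=0$. In fact your formula is the right one and the displayed one is erroneous: since $x^{(n)}+u^{(n)}$ is invariant and $\bar x=0$, necessarily $\bar u=x^0+u^0$, not $u^0$. Concretely, take $\beta_1=\beta_2=k_1=k_2=1$, $b=\alpha=0$ (conditions (\ref{cond}) hold) and $\lambda^0=(1/2,1/2,0,0)$, which is not a fixed point since $A(u^0,v^0)=1/2>0$; the trajectory converges to $(0,1,0,0)$, not to $(0,1/2,0,1/2)$ as the proposition asserts. (The paper's own proof gives no details here, disposing of this case with ``similarly''.) So your proof is sound, but its conclusion proves a corrected statement: the fourth line of the proposition should read $(0,\,x^0+u^0,\,0,\,1-x^0-u^0)$, and you should say so explicitly instead of asserting agreement.
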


\begin{proof}
If $b=0, \alpha=0$ then the operator (\ref{disc}) is

\begin{equation}
V:\left\{ \begin{alignedat}{1}x^{(1)} & =x-\beta_1(k_1u+k_2v)x\\
u^{(1)} & =u+\beta_1(k_1u+k_2v)x\\
y^{(1)} & =y-\beta_2(k_1u+k_2v)y \\
v^{(1)} & =v+\beta_2(k_1u+k_2v)y
\end{alignedat}
\right.\label{case3}
\end{equation}
From the equations of the operator (\ref{case3}) we have that the sequences $x^{(n)},u^{(n)},$ $y^{(n)}, v^{(n)}$ are monotone, so they have limits. The case $k_1=k_2=0$ is clear. If $\beta_1=0, \beta_2>0, k_1+k_2>0$ then
$$x^{(n)} =x^0, u^{(n)} =u^0, y^{(n+1)} =y^{(n)}-\beta_2(k_1u^{(n)}+k_2v^{(n)})y^{(n)}.$$ If $A(u^0,v^0)=k_1u^0+k_2v^0\neq0,$ then $A(u^{(n)},v^{(n)})\neq0,$ otherwise,
$$A(u^{(n)},v^{(n)})=k_1u^{(n)}+k_2v^{(n)}=k_1u^0+k_2v^0=0,$$ so $A(u^{(n)},v^{(n)})=k_1u^{(n)}+k_2v^{(n)}$ has non-zero limit.  We assume that $\lim_{n\to\infty}y^{(n)}=\overline{y}\neq0.$ Then from $y^{(n+1)}=y^{(n)}-\beta_2(k_1u^{(n)}+k_2v^{(n)})y^{(n)}$ we get limit and it is contradiction to $\lim_{n\to\infty}A(u^{(n)},v^{(n)})\neq0,$ so we have a proof of the second case. Similarly, for cases $\beta_1>0, \beta_2=0, k_1+k_2>0$ and $\beta_1>0, \beta_2>0, k_1k_2>0$ one can complete the prove of this Proposition.
\end{proof}

\subsection{Case no recovery rate and infectivity of persons in $I_1$ ($\alpha=0, k_2=0$).}
\begin{prop}
For an initial point $\lambda^{0}=\left(x^{0},u^{0},y^{0},v^{0}\right)\in S^{3}$
(except fixed points) the trajectory (under action of the operator (\ref{disc})) has the following limit
\[
\lim_{n\to\infty}V^{(n)}(\lambda^{0})=\begin{cases}
\lambda_1  & \text{if } \beta_1k_1\leq b \ \ \text{or} \ \ u^{0}=0 \\
\lambda_{9} & \text{if }  \beta_1k_1>b \ \ \text{and} \ \ u^{0}>0\\
\end{cases}
\]
\end{prop}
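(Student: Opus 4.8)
The plan is to exploit the decoupling that occurs when $\alpha=0$ and $k_2=0$. Here $A(u,v)=k_1u$, so the operator (\ref{disc}) becomes
\[
x^{(1)}=x+b(1-x)-\beta_1k_1ux,\qquad u^{(1)}=u\bigl(1-b+\beta_1k_1x\bigr),
\]
\[
y^{(1)}=y\bigl(1-b-\beta_2k_1u\bigr),\qquad v^{(1)}=v(1-b)+\beta_2k_1uy.
\]
The first two equations involve only $x$ and $u$, so the pair $(x,u)$ evolves autonomously, independently of $(y,v)$. First I would track $s=x+u$: adding the two lines gives $s^{(1)}=s+b(1-s)$, whence $1-s^{(n)}=(1-b)^n(1-s^0)$. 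Throughout we may assume $b>0$, since the case $\alpha=b=0$ is covered by Proposition \ref{prop3}; as $0\le 1-b<1$ by (\ref{cond}), we get $s^{(n)}\uparrow1$, so $y^{(n)}+v^{(n)}=1-s^{(n)}\to0$, and since $y^{(n)},v^{(n)}\ge0$ each tends to $0$. Thus every trajectory approaches the invariant edge $\{y=v=0\}$, and it remains to locate the limit of $(x^{(n)},u^{(n)})$ on the line $x+u=1$.

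Next I would dispose of the two cases yielding $\lambda_1$. If $u^0=0$ then $u^{(n)}\equiv0$, the recursion reduces to $x^{(n+1)}=x^{(n)}+b(1-x^{(n)})$, and $x^{(n)}\to1$, so the limit is $\lambda_1$. If $\beta_1k_1\le b$ and $u^0>0$, then since $x^{(n)}\le1$ the factor $1-b+\beta_1k_1x^{(n)}\le1-b+\beta_1k_1\le1$, so $u^{(n)}$ is non-increasing with a limit $\bar u\ge0$. When $\beta_1k_1<b$ this factor is bounded by $1-b+\beta_1k_1<1$, forcing geometric decay $u^{(n)}\to0$; when $\beta_1k_1=b$ the ratio $u^{(n+1)}/u^{(n)}=1-b(1-x^{(n)})$ tends to $1$ only if $x^{(n)}\to1$, and then $\bar x+\bar u=1$ gives $\bar u=0$. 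In either sub-case $u^{(n)}\to0$, $x^{(n)}\to1$, and the limit is $\lambda_1$.

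The substantive case is $\beta_1k_1>b$ with $u^0>0$, where the claimed limit is $\lambda_9=\bigl(\tfrac{b}{\beta_1k_1},\tfrac{\beta_1k_1-b}{\beta_1k_1},0,0\bigr)$. Writing $x^{(n)}=s^{(n)}-u^{(n)}$, the $u$-recursion becomes a non-autonomous logistic map
\[
u^{(n+1)}=u^{(n)}\bigl(a^{(n)}-\beta_1k_1u^{(n)}\bigr),\qquad a^{(n)}=1-b+\beta_1k_1s^{(n)}\uparrow 1+(\beta_1k_1-b).
\]
Its limiting map $h(u)=u\bigl(1+(\beta_1k_1-b)-\beta_1k_1u\bigr)$ has fixed points $0$ and $u^*=\tfrac{\beta_1k_1-b}{\beta_1k_1}$; one checks $h(u)-u=\beta_1k_1u(u^*-u)$ and $h'(u^*)=1-(\beta_1k_1-b)\in[0,1)$, using $0<\beta_1k_1-b\le1$ from (\ref{cond}). (Equivalently, the Jacobian of the $(x,u)$-system at $\lambda_9$ is $\left[\begin{smallmatrix}1-\beta_1k_1 & -b\\ \beta_1k_1-b & 1\end{smallmatrix}\right]$, with eigenvalues $1-b$ and $1-(\beta_1k_1-b)$, both in $[0,1)$, so $\lambda_9$ is an attracting node.) I would rule out $u^{(n)}\to0$ from the saddle nature of $\lambda_1$ (Proposition \ref{fp1}, as $\beta_1k_1>b+\alpha=b$): if $u^{(n)}\to0$ then $x^{(n)}=s^{(n)}-u^{(n)}\to1$, so $1-b+\beta_1k_1x^{(n)}\to1+(\beta_1k_1-b)>1$ and $u^{(n)}$ eventually increases, contradicting $u^{(n)}>0$, $u^{(n)}\to0$. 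Hence $u^*$ is the only admissible limit value.

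The hard part is upgrading ``the only admissible limit is $u^*$'' to genuine convergence $u^{(n)}\to u^*$, i.e.\ controlling the vanishing non-autonomous perturbation. I would base this on the explicit decay $1-s^{(n)}=(1-b)^n(1-s^0)$, which is summable, rewriting
\[
u^{(n+1)}-u^{(n)}=\beta_1k_1u^{(n)}\bigl(u^*-u^{(n)}\bigr)-\beta_1k_1u^{(n)}\bigl(1-s^{(n)}\bigr).
\]
Since $a^{(n)}\le1+(\beta_1k_1-b)$ we have the envelope bound $u^{(n+1)}\le h(u^{(n)})$, and as $u^*\le\tfrac{1+(\beta_1k_1-b)}{2\beta_1k_1}$ (because $\beta_1k_1-b\le1$) the map $h$ is increasing on $(0,u^*]$ with $h(u^*)=u^*$, so $[0,u^*]$ is forward invariant; on it the first term above drives $u^{(n)}$ toward $u^*$ up to a summably small error. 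A comparison of $\liminf$ and $\limsup$ then traps $u^{(n)}$ in shrinking neighbourhoods of $u^*$, giving $u^{(n)}\to u^*$ and hence $x^{(n)}=s^{(n)}-u^{(n)}\to\tfrac{b}{\beta_1k_1}$. Together with $y^{(n)},v^{(n)}\to0$ this yields $\lim_{n}V^{(n)}(\lambda^0)=\lambda_9$, completing the proof.
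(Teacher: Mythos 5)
Your argument is correct in substance, and on the decisive case $\beta_1k_1>b$, $u^0>0$ it takes a genuinely different route from the paper. The preliminary steps coincide: the paper also isolates the autonomous $(x,u)$-subsystem (its operator $W$ in (\ref{subcase4})), shows $y^{(n)},v^{(n)}\to0$ at rate $(1-b)^n$, and settles $u^0=0$ and $\beta_1k_1\le b$ by monotonicity. For the main case, however, the paper passes to the invariant line $x+u=1$: it normalizes $W$ onto that line, reduces the dynamics there to the one-dimensional map $f_{b,\beta_1k_1}(x)=b+(1-b-\beta_1k_1)x+\beta_1k_1x^2$, proves in Lemma \ref{ps} that this map is topologically conjugate to the logistic map $F_\mu$ with $\mu=\beta_1k_1-b+1\in(1,3)$, and quotes the known global behaviour of the quadratic family to conclude convergence to the attracting fixed point, hence to $\lambda_9$. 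You instead keep the genuinely non-autonomous recursion $u^{(n+1)}=u^{(n)}\bigl(1-b+\beta_1k_1s^{(n)}-\beta_1k_1u^{(n)}\bigr)$ and control the vanishing perturbation $\beta_1k_1u^{(n)}(1-s^{(n)})$ via its geometric, hence summable, decay. This buys something real: for initial points with $y^0+v^0>0$ the orbit never lies on the line $x+u=1$, and the paper bridges this gap only by the assertion that $W$ and its normalization $W_0$ ``have same dynamics'', which its computation justifies only on that line; your perturbation argument supplies exactly the missing control, and is self-contained (no conjugacy, no appeal to the logistic family). What the paper's route buys in exchange is an explicit identification of the limiting one-dimensional dynamics with a completely understood textbook family.

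The one place where you still owe work is the last step, which you state only as a plan. With the ingredients you assembled it closes cleanly via a dichotomy. Either $u^{(n)}>u^*$ for all large $n$: then both terms in your identity for $u^{(n+1)}-u^{(n)}$ are nonpositive, so $u^{(n)}$ decreases to some $\bar u\ge u^*>0$, and passing to the limit in the identity gives $\beta_1k_1\bar u(u^*-\bar u)=0$, i.e.\ $\bar u=u^*$. Or else $u^{(m)}\le u^*$ for some $m$: by your forward-invariance remark $u^{(n)}\in[0,u^*]$ for all $n\ge m$, and then $w^{(n)}=u^{(n)}-\beta_1k_1\sum_{k\ge n}(1-s^{(k)})$ satisfies $w^{(n+1)}-w^{(n)}=\beta_1k_1u^{(n)}(u^*-u^{(n)})+\beta_1k_1(1-s^{(n)})(1-u^{(n)})\ge0$, so $w^{(n)}$ is nondecreasing and bounded above by $u^*$, hence convergent; therefore $u^{(n)}$ converges, its limit lies in $\{0,u^*\}$, and your instability argument excludes $0$. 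One caveat, which affects the paper's proof equally: the exclusion of the limit $0$ requires $u^{(n)}>0$ for all $n$, which holds unless $b=1$ and $x^0=0$ (in that degenerate corner $u^{(1)}=0$ and the orbit in fact reaches $\lambda_1$), so strictly speaking the stated dichotomy needs $b<1$ or $x^0>0$.
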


\begin{proof}
Here we consider the case $b>0,$ otherwise it coincides with previous Proposition cases. If $\alpha=k_2=0$ then the operator (\ref{disc}) is

\begin{equation}
V:\left\{ \begin{alignedat}{1}x^{(1)} & =x+b-bx-\beta_1k_1ux\\
u^{(1)} & =u-bu+\beta_1k_1ux\\
y^{(1)} & =y-by-\beta_2k_1uy \\
v^{(1)} & =v-bv+\beta_2k_1uy
\end{alignedat}
\right.\label{case4}
\end{equation}
From the system (\ref{case4}) we have $y^{(1)}=y-by-\beta_2k_1uy\leq y(1-b),$ i.e., $y^{(n)} \leq y^0(1-b)^n,$ so $y^{(n)}\rightarrow0$ as $n\rightarrow\infty.$ Moreover, from the sum of last two equations of (\ref{case4}) we get
$$\lim_{n\to\infty}(y^{(n+1)}+v^{(n+1)})=(1-b)\lim_{n\to\infty}(y^{(n)}+v^{(n)})=(y^{0}+v^{0})\lim_{n\to\infty}(1-b)^n=0,$$
thus, the sequence $v^{(n)}$ converges to zero. \\

 --If $\beta_1k_1=0$ then $u^{(n)} = u^0(1-b)^n$ has zero limit and from this we have that the  sequence $x^{(n)}$ has limit one. Thus, the limit of the operator $V$ is $\lambda_1.$

 --If $0<\beta_1k_1\leq b$ then from  $u^{(1)}=u-(b-\beta_1k_1x)u\leq u$ we get that the sequence $u^{(n)}$ has limit. We assume that $\lim_{n\to\infty}u^{(n)}=\bar{u}\neq0,$ then from this and $u^{(n+1)}=u^{(n)}-(b-\beta_1k_1x^{(n)})u^{(n)}$ we have $\lim_{n\to\infty}x^{(n)}=\frac{b}{\beta_1k_1},$ and from this $\bar{u}=1-\frac{b}{\beta_1k_1}=\frac{\beta_1k_1-b}{\beta_1k_1}.$ But the condition $\beta_1k_1\leq b$ is contradiction of positiveness of $\bar{u},$ so  $u^{(n)}$ has zero limit.  Hence, limit point of the operator is $\lambda_1.$

 --If $\beta_1k_1>b$ and $u^{0}>0.$ From first two equations of the operator (\ref{case4}) we formulate a new operator:

\begin{equation}
W:\left\{ \begin{alignedat}{1}x^{(1)} & =x+b-bx-\beta_1k_1ux\\
u^{(1)} & =u-bu+\beta_1k_1ux
\end{alignedat}\label{subcase4}
\right.
\end{equation}
 Here we normalize the operator (\ref{subcase4}) as following:
 \begin{equation}
W_0:\left\{ \begin{alignedat}{1}x^{(1)} & =\frac{x+b-bx-\beta_1k_1ux}{x+u+b-b(x+u)}\\
u^{(1)} & =\frac{u-bu+\beta_1k_1ux}{x+u+b-b(x+u)}
\end{alignedat}
\right.
\end{equation}
 For this operator $x^{(n)}+u^{(n)}=1, \ \ n\geq1,$ so from $x^{(1)}+u^{(1)}=1$ we have
 \[
 \left\{\begin{alignedat}{1}x^{(2)} & =x^{(1)}+b-bx^{(1)}-\beta_1k_1u^{(1)}x^{(1)}\\
u^{(2)} & =u^{(1)}-bu^{(1)}+\beta_1k_1u^{(1)}x^{(1)}
\end{alignedat}
\right.
\]
 Thus, operators $W$ and $W_0$ have same dynamics. From first equation of the last system we obtain
  $x^{(2)}=(1-b-\beta_1k_1)x^{(1)}+\beta_1k_1(x^{(1)})^2+b.$ If we denote $x^{(1)}=x, x^{(2)}=f_{b,\beta_1k_1}(x)$ then we get
    $$f_{b,\beta_1k_1}(x)=b+(1-b-\beta_1k_1)x+\beta_1k_1x^2.$$

 \begin{defn}\label{top} (see \cite{De}, p. 47) Let $f:A\rightarrow A$ and $g:B\rightarrow B$ be two maps. $f$ and $g$ are said to be topologically conjugate if there exists a homeomorphism $h:A\rightarrow B$ such that, $h\circ f=g\circ h$. The homeomorphism $h$ is called a topological conjugacy.
 \end{defn}
 Let $F_\mu(x)=\mu x(1-x)$ be quadratic family (discussed in \cite{De}) and $f_{b,\beta_1k_1}(x)=b+(1-b-\beta_1k_1)x+\beta_1k_1x^2.$
 \begin{lem}\label{ps} Two maps $F_\mu(x)$ and $f_{b,\beta_1k_1}(x)$ are topologically conjugate for $\mu=\beta_1k_1-b+1$.
 \end{lem}
 \begin{proof} We take the linear map $h(x)=px+q$ and by Definition \ref{top} we should have $h(F_\mu(x))=f_{b,\beta_1k_1}(h(x)),$ i.e.,
 $$p\mu x(1-x)+q=b+(px+q)(1-b-\beta_1k_1)+\beta_1k_1(px+q)^2$$
 from this identity we get
 \[
\begin{cases}
-p\mu=\beta_1k_1p^2\\
p\mu=p(1-b-\beta_1k_1)+2pq\beta_1k_1\\
q=q(1-b-\beta_1k_1)+\beta_1k_1q^2+b
\end{cases}
\Rightarrow
\begin{cases}
p=-\frac{\mu}{\beta_1k_1}\\
q=\frac{\mu-1+b+\beta_1k_1}{2\beta_1k_1}\\
\beta_1k_1q^2-(b+\beta_1k_1)q+b=0
\end{cases}
\]
The roots of the equation $\beta_1k_1q^2-(b+\beta_1k_1)q+b=0$ are $q=1, q=\frac{b}{\beta_1k_1}.$ If we choose $q=1$ then by $q=\frac{\mu-1+b+\beta_1k_1}{2\beta_1k_1}$ we have $\mu=\beta_1k_1-b+1.$ Then the homeomorphism is $h(x)=\frac{b-\beta_1k_1-1}{\beta_1k_1}x+1.$ Moreover, since $b<\beta_1k_1\leq2$ we have $1<\mu<3.$
\end{proof}
The importance of this Lemma is that if two maps are topologically conjugate
then they have essentially the same dynamics (see \cite{De}, p. 53).
The operator $f_{b,\beta_1k_1}(x)=b+(1-b-\beta_1k_1)x+\beta_1k_1x^2$ has two fixed points $p_1=1$ and $p_2=\frac{b}{\beta_1k_1}.$ In addition, $f'_{b,\beta_1k_1}(x)=1-b-\beta_1k_1+2\beta_1k_1x,$ from this and $\beta_1k_1>b$ it obtains that the fixed point $p_1=1$ is repelling, $p_2=\frac{b}{\beta_1k_1}$ is attractive. Moreover, for any initial point $x\in(0,1)$ and for $\mu\in(1,3)$ the trajectory of the operator $F_{\mu}(x)$ converges to the attractive fixed point (see \cite{De}, p. 32). Thus, for the case $\beta_1k_1>b$ the limit point of the operator (\ref{case4}) is $\lambda_{9}.$
\end{proof}

\subsection{Case no only susceptibility of persons in $S_1$ ($\beta_2=0, \beta_1>0$).}
\begin{prop}\label{pro12} For an initial point $\lambda^{0}=\left(x^{0},u^{0},y^{0},v^{0}\right)\in S^{3}$
(except fixed points) the trajectory (under action of the operator (\ref{disc})) has the following limit
\[
\lim_{n\to\infty}V^{(n)}(\lambda^{0})=\begin{cases}
(x^0,0,1-x^0-v^0,v^0) & \text{if }  b=0, \alpha>0 \ \ \text{and}\ \  k_1u^0+k_2v^0=0 \\
(0,0,1-v^0,v^0) & \text{if }  b=0, \alpha>0, k_2v^0>0\\
(\bar{x},0,1-\bar{x}-v^0,v^0) & \text{if }  b=k_2v^0=0, k_1u^0>0, \alpha>0\\
\lambda_1  & \text{if } \ \ b\alpha>0, k_1u^0+k_2v^0=0 , \\
\lambda_1  & \text{if } \ \ b\alpha>0, \, k_2v^0=0, \, \beta_1k_1\leq b+\alpha, \\
\end{cases}
\]
where $\bar{x}=\bar{x}(\lambda^0)$
\end{prop}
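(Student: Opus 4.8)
The plan is to first specialise the operator (\ref{disc}) to $\beta_2=0$, $\beta_1>0$, which gives
\[
x^{(1)}=x+b-bx-\beta_1(k_1u+k_2v)x,\qquad u^{(1)}=u(1-b-\alpha)+\beta_1(k_1u+k_2v)x,
\]
\[
y^{(1)}=(1-b)y+\alpha u,\qquad v^{(1)}=(1-b)v .
\]
The decisive structural feature is that the last equation is autonomous and linear, so $v^{(n)}=v^0(1-b)^n$; hence $v^{(n)}\equiv v^0$ when $b=0$ and $v^{(n)}\to0$ when $b>0$. Likewise $y^{(n+1)}=(1-b)y^{(n)}+\alpha u^{(n)}$ is a scalar linear recursion driven by $u^{(n)}$, so once I control $u^{(n)}$ the behaviour of $y^{(n)}$ and, via $x+u+y+v=1$, of $x^{(n)}$ follows. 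The whole argument therefore reduces to understanding the force of infection $A(u^{(n)},v^{(n)})=k_1u^{(n)}+k_2v^{(n)}$ and the $(x,u)$-subsystem.

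For the three cases with $b=0$ I keep $v^{(n)}\equiv v^0$ and split according to $A$. If $k_1u^0+k_2v^0=0$ then, since all coordinates are non-negative and $u^{(n)}=u^0(1-\alpha)^n$, an induction shows $A(u^{(n)},v^{(n)})\equiv0$; then $x^{(n)}=x^0$, $u^{(n)}\to0$, and $y^{(n)}$ increases to $1-x^0-v^0$, giving the first line. If $k_2v^0>0$ then $A(u^{(n)},v^{(n)})\ge k_2v^0>0$ for all $n$, so $x^{(n+1)}=(1-\beta_1A^{(n)})x^{(n)}$ is strictly decreasing; passing to the limit forces $x^{(n)}\to0$, after which the recursion for $u^{(n)}$ has multiplier $1-\alpha<1$ and forcing $\beta_1A^{(n)}x^{(n)}\to0$, so $u^{(n)}\to0$ and $y^{(n)}\to1-v^0$. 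The third case, $k_2v^0=0$ but $k_1u^0>0$, is the one whose limit genuinely depends on $\lambda^0$: here $A(u^{(n)},v^{(n)})=k_1u^{(n)}$, so the $(x,u)$-subsystem closes, and the identity $x^{(n+1)}+u^{(n+1)}=x^{(n)}+u^{(n)}-\alpha u^{(n)}$ shows that $x^{(n)}+u^{(n)}$ is decreasing while $x^{(n)}$ is itself decreasing; both converge, hence $u^{(n)}\to\bar u$ with $\alpha\bar u=0$, so $u^{(n)}\to0$ and $x^{(n)}\to\bar x=\bar x(\lambda^0)$, whence $y^{(n)}\to1-\bar x-v^0$.

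For $b>0$ (the last two lines) I again exploit $v^{(n)}=v^0(1-b)^n\to0$. When $k_1u^0+k_2v^0=0$ the same induction gives $A(u^{(n)},v^{(n)})\equiv0$, so $u^{(n)}=u^0(1-b-\alpha)^n\to0$, the scalar map $x\mapsto x+b(1-x)$ drives $x^{(n)}\to1$, and $y^{(n)}\to0$, i.e. the limit is $\lambda_1$. In the final case $k_2v^0=0$ forces $A(u^{(n)},v^{(n)})=k_1u^{(n)}$ for all $n$, hence $u^{(n+1)}=u^{(n)}\bigl(1-b-\alpha+\beta_1k_1x^{(n)}\bigr)$; since $x^{(n)}\in[0,1]$ and $\beta_1k_1\le b+\alpha$, the multiplier never exceeds $1$. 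When $\beta_1k_1<b+\alpha$ it is bounded by a constant $<1$ and $u^{(n)}\to0$ geometrically. The genuine obstacle is the boundary subcase $\beta_1k_1=b+\alpha$, where the multiplier equals $1-\beta_1k_1(1-x^{(n)})$ and only satisfies $\le1$: here I argue that $u^{(n)}$ is non-increasing, hence convergent to some $\bar u\ge0$; if $\bar u>0$ then $u^{(n+1)}/u^{(n)}\to1$ forces $x^{(n)}\to1$, and combined with $v^{(n)}\to0$ this gives $x^{(n)}+u^{(n)}+v^{(n)}\to1+\bar u>1$, contradicting $x+u+y+v=1$ with $y\ge0$; therefore $\bar u=0$. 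Once $u^{(n)}\to0$, the recursion $(y+v)^{(n+1)}=(1-b)(y+v)^{(n)}+\alpha u^{(n)}$ with vanishing forcing yields $(y+v)^{(n)}\to0$, so $x^{(n)}\to1$ and the limit is $\lambda_1$. I expect this boundary subcase to be the main difficulty, since the straightforward contraction estimate degenerates and one must combine the monotonicity of $u^{(n)}$ with mass conservation to rule out a positive limit.
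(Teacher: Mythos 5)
Your proposal is correct in all five cases, but in the decisive fifth case it takes a genuinely different route from the paper. For the three $b=0$ lines and the fourth line your arguments (invariance of $\{A=0\}$ by induction, geometric decay of $x^{(n)}$ when $k_2v^0>0$, monotonicity of $x^{(n)}+u^{(n)}$ in the closed $(x,u)$-subsystem) match the paper's in substance, though the paper enumerates the subcases $k_1=0$, $u^0=0$, etc.\ by hand where you use a single induction. The real divergence is the case $b\alpha>0$, $k_2v^0=0$, $\beta_1k_1\leq b+\alpha$: the paper proves that \emph{every} coordinate sequence converges (monotone $u^{(n)}$; eventual monotonicity of $y^{(n)}$ via forward invariance of the set $\{by-\alpha u\geq0\}$), then invokes continuity of $V$ together with Proposition \ref{fixp} --- for $\beta_1k_1\leq b+\alpha$ the fixed point $\lambda_1$ is unique --- to conclude the limit must be $\lambda_1$. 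You instead prove the limit directly: geometric decay of $u^{(n)}$ when $\beta_1k_1<b+\alpha$, and at the boundary $\beta_1k_1=b+\alpha$ a contradiction between $u^{(n)}\to\bar u>0$ (which would force $x^{(n)}\to1$) and conservation $x+u+y+v=1$; then the driven recursion $(y+v)^{(n+1)}=(1-b)(y+v)^{(n)}+\alpha u^{(n)}$ finishes. Your route is more self-contained --- it needs neither the fixed-point classification nor the invariance argument for $y^{(n)}$ (which, as written in the paper, starts from the assumption $by^0\geq\alpha u^0$ and leaves the complementary initial condition implicit) --- and it isolates the boundary subcase explicitly; the paper's route is shorter once Proposition \ref{fixp} is available and illustrates the general principle that convergent trajectories of a continuous map can only land on fixed points.
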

\begin{proof}
Here we consider the case $\beta_1>0,$ otherwise, this proposition is same with Proposition \ref{prop1}. We note that the case $b=\alpha=0$ is considered in Proposition \ref{prop3}.   If $\beta_2=0$ then the operator (\ref{disc}) is

\begin{equation}
V:\left\{ \begin{alignedat}{1}x^{(1)} & =x+b-bx-\beta_1 (k_1u+k_2v)x\\
u^{(1)} & =u-bu-\alpha u+\beta_1 (k_1u+k_2v)x\\
y^{(1)} & =y-by+\alpha u\\
v^{(1)} & =v(1-b)
\end{alignedat}
\right.\label{case5}
\end{equation}
 \textbf{Case:} $b=0, \alpha>0, k_1u^0+k_2v^0=0.$ From $A(u^0,v^0)=k_1u^0+k_2v^0=0$ we have the following simple cases:

--If $k_1=k_2=0$ then $x^{(n)}=x^0, v^{(n)}=v^0$ and $u^{(n)}=u^0(1-\alpha)^n\rightarrow0.$

--If $k_1=v^0=0$ then $v^{(n)}=0,$ $x^{(n)}=x^0,$  so $u^{(n)}=u^0(1-\alpha)^n\rightarrow0$ and $y^{(n)}\rightarrow 1-x^0.$

--If $u^0=k_2=0,$ then $u^{(n)}=0, v^{(n)}=v^0,$ so $x^{(n)}=x^0, y^{(n)}=y^0=1-x^0.$

--If $u^0=v^0=0,$ then $u^{(n)}=0, v^{(n)}=0,$ so $x^{(n)}=x^0, y^{(n)}=y^0=1-x^0.$
 \textbf{Case:} $b=0, \alpha>0, k_2v^0>0$ then $v^{(n)}=v^0$ and $x^{(1)}=x-\beta_1 (k_1u+k_2v)x\leq x,$ $y^{(1)}=y+\alpha u\geq y,$ i.e., the sequences  $x^{(n)}, y^{(n)}, v^{(n)}$ have limits, so $u^{(n)}$ also has limit. From the equation $y^{(n+1)}=y^{(n)}+\alpha u^{(n)}$ we get limit and by $\alpha>0$ it obtains that $u^{(n)}$ converges to zero. Moreover, by the second equation of the system (\ref{case5}), $u^{(n+1)}=(1-\alpha)u^{(n)}+\beta_1(k_1u^{(n)}+k_2v^{(n)})x^{(n)},$ if we take a limit from two sides then we have $0=\beta_1k_2v^0\bar{x},$ from this and $k_2v^0>0$ we have $\bar{x}=0,$ where $\bar{x}$ is a limit of the sequence $x^{(n)}.$\\
  \textbf{Case:} $b=k_2v^0=0, k_1u^0>0, \alpha>0.$ Here also as previous case, all sequences have limits and $v^{(n)}=v^0$. From  $y^{(n+1)}=y^{(n)}+\alpha u^{(n)}$ we get limit and by $\alpha>0$ it obtains that $u^{(n)}$ converges to zero. But, the limit $\lim_{n\to\infty}x^{(n)}=\bar{x}$ depends on initial conditions $x^0, u^0.$\\
  \textbf{Case:} $b>0, \alpha>0, k_1u^0+k_2v^0=0.$ 
  
  --If $k_1=k_2=0$ then $x^{(1)}=x+(1-x)b\geq x, u^{(n)}=u^0(1-b-\alpha)^n\rightarrow0$ and from $v^{(n)}=v^0(1-b)^n\rightarrow0$ we get that all sequences have limits. Since $b>0$ we have that $x^{(n+1)}=x^{(n)}(1-b)+b$ has limit 1.

--If $k_1=v^0=0$ then $v^{(n)}=0,$  $u^{(n)}=u^0(1-b-\alpha)^n\rightarrow0$ and as previous case $x^{(n)}\rightarrow 1.$

--If $u^0=k_2=0$ then $u^{(n)}=0, y^{(n)}=y^0(1-b)^n\rightarrow0$ and from $v^{(n)}\rightarrow0$ implies $x^{(n)}\rightarrow 1.$

--If $u^0=v^0=0$ then $u^{(n)}=0, v^{(n)}=0,y^{(n)}=y^0(1-b)^n\rightarrow0$ so $x^{(n)}\rightarrow 1.$

\textbf{Case:} $b>0, \alpha>0, k_2v^0=0,  \beta_1k_1\leq b+\alpha.$ 

-- If  $k_2=0$ then 
$$u^{(1)}=u-(b+\alpha-\beta_1k_1x)u\leq u,$$
i.e., the sequence $u^{(n)}$ has limit. Let us to show existence the limit of $y^{(n)}.$ We assume that $y^{(1)}\leq y,$ i.e., $by-\alpha u\geq0.$ If we show that $by^{(1)}-\alpha u^{(1)}\geq0$ then it obtains that $y^{(n)}$ has limit.  We check this condition:
$$
by^{(1)}-\alpha u^{(1)}=b(y-by+\alpha u)-\alpha(u-bu-\alpha u+\beta_1k_1ux)=$$
$$=by-\alpha u+b\alpha u-b^2y+b\alpha u+\alpha^2u-\alpha \beta_1k_1ux=$$
$$=by-\alpha u-b(by-\alpha u)+\alpha u(b+\alpha-\beta_1k_1x)=$$
$$=(by-\alpha u)(1-b)+\alpha u(b+\alpha-\beta_1k_1x)\geq0.
$$
Thus, the sequences $u^{(n)}$ and $y^{(n)}$ have limits and from $v^{(n)}\rightarrow0$ we get that all sequences have limits. For the case $\beta_1k_1\leq b+\alpha$ fixed point $\lambda_1$ is unique, so it must be limit point. 

--If $v^0=0$ then the proof is same with case $k_2=0.$  Thus, the Proposition is proved. 

  For the case $b>0, \alpha>0, k_2v^0>0$ if we assume that the sequences $x^{(n)}, u^{(n)}, y^{(n)}$ have limits then for $\beta_1k_1\leq b+\alpha$ fixed point $\lambda_1$ is unique globally attracting, for  $\beta_1k_1>b+\alpha$ fixed point $\lambda_{1}$ is saddle and fixed point $\lambda_{10}$ must be limit point of the operator $V,$ because there is no other fixed points. In addition, we consider some numerical simulations for these two cases ($\beta_1k_1\leq b+\alpha$ and $\beta_1k_1>b+\alpha$). In Fig. \ref{f1} the trajectory converges to $\lambda_1,$ and in Fig \ref{f2} the trajectory converges to $\lambda_{10}.$ Therefore we formulate the following conjecture:

\textbf{Conjecture 1.}  If $\beta_2=0$ then for an initial point $\lambda^{0}=\left(x^{0},u^{0},y^{0},v^{0}\right)\in S^{3}$
(except fixed points) the trajectory has the following limit
\[
\lim_{n\to\infty}V^{(n)}(\lambda^{0})=\begin{cases}
\lambda_1  & \text{if } \ \  \ \ \beta_1k_1\leq b+\alpha, b\alpha>0 \ \ \text{and} \ \ k_2v^0>0\\
\lambda_{10} & \text{if } \ \ u^0+v^0>0 \ \ \text{and} \ \ \beta_1k_1>b+\alpha, b\alpha>0\\
\end{cases}
\]

  \begin{figure}[h!]
\begin{multicols}{2}
\hfill
\includegraphics[width=6cm]{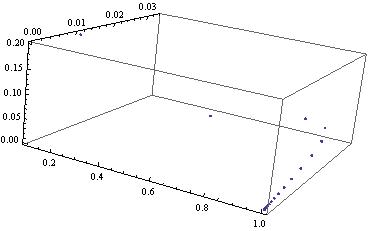}
\hfill
\caption{$\alpha=0.2, b=0.6, \beta_1=0.5, \beta_2=0, k_1=1, k_2=0.3, x^{0}=0.1, u^{0}=0.01, y^0=0.2,$ $\lim_{n\to\infty}V^{(n)}(\lambda^{0})=\lambda_1.$}\label{f1}
\label{figLeft}
\hfill
\includegraphics[width=6cm]{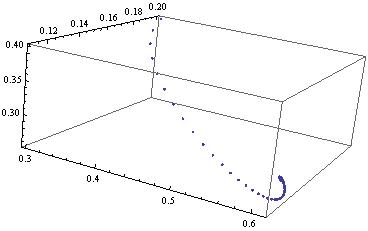}
\hfill
\caption{$\alpha=0.2, b=0.1, \beta_1=0.5, \beta_2=0, k_1=1, k_2=0.3, x^{0}=0.3, u^{0}=0.2, y^0=0.4,$ $\lim_{n\to\infty}V^{(n)}(\lambda^{0})=\lambda_{10}.$}\label{f2}
\label{figRight}
\end{multicols}
\end{figure}

  \end{proof}

The following Conjecture also formulated for the limit point of the operator (\ref{disc}) with nonzero parameters.

 \textbf{Conjecture 2.} If $\alpha b \beta_1\beta_2 k_1k_2>0$ then for an initial point $\lambda^{0}=\left(x^{0},u^{0},y^{0},v^{0}\right)\in S^{3}$
(except fixed points) the trajectory  has the following limit
\[
\lim_{n\to\infty}V^{(n)}(\lambda^{0})=\begin{cases}
\lambda_1  & \text{if } \ \ u^0=v^0=0 \ \ \text{or} \ \ \beta_1k_1\leq b+\alpha, \ \ b(b+\alpha)\geq\alpha\beta_2k_2 \\
\lambda_{11} & \text{if } \ \  u^0+v^0>0 \ \ \text{and} \ \ \beta_1k_1>b+\alpha\\
\end{cases}
\]

The case $u^0=v^0=0$ is clear, i.e.,  the limit of the operator $V$ is $\lambda_1.$

For the case  $u^0+v^0>0$ let us assume that all sequences have limits and consider limit behaviour in below.
First, we denote by $f(x), g(x):$
\begin{equation}
f(x)=b+\beta_1x, \ \ \ \ \ \  g(x)=\frac{b\beta_1k_1}{b+\alpha}+\frac{\alpha\beta_1\beta_2k_2x}{(b+\beta_2x)(b+\alpha)}.
\end{equation}
Then the roots of the equation (\ref{fpc})  are roots of the equation $f(x)=g(x).$ We consider graphical solutions of this equation.

\textbf{Case}: $\beta_1k_1>b+\alpha$. In this case  $\frac{b\beta_1k_1}{b+\alpha}>b$ and the graphic of the function $g(x)$ has horizontal asymptote $y=\frac{\beta_1(bk_1+\alpha k_2)}{b+\alpha}=const,$ so the equation $f(x)=g(x)$ has unique positive solution (Fig. \ref{f5}). Moreover, for $\beta_1k_1>b+\alpha$ fixed point $\lambda_1$ is saddle fixed point, so $\lambda_{11}$ must be limit point of the operator (\ref{disc}).

\textbf{Case}: $\beta_1k_1<b+\alpha.$ In this case, slope of the $f(x)$ is $Tan\varphi=\beta_1$ and slope of a tangent at the point $x=0$ of $g(x)$ is $Tan\psi=\frac{\alpha\beta_1\beta_2k_2}{b(b+\alpha)}.$ It is clear that, if $Tan\varphi\geq Tan\psi,$  i.e., $\beta_1\geq\frac{\alpha\beta_1\beta_2k_2}{b(b+\alpha)}$  or  $b(b+\alpha)\geq\alpha\beta_2k_2$ then  $f(x)=g(x)$ does not have positive solution (Fig.\ref{f6}).

\textbf{Case}: $\beta_1k_1=b+\alpha.$ In this case the equation $f(x)=g(x)$ has solution $x=0,$ i.e., operator (\ref{disc}) has unique fixed point $\lambda_1.$
\begin{figure}[h!]
\begin{multicols}{2}
\hfill
\includegraphics[width=6cm]{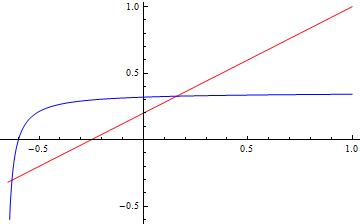}
\hfill
\caption{ $\beta_1k_1>b+\alpha$}\label{f5}
\label{figLeft}
\hfill
\includegraphics[width=6cm]{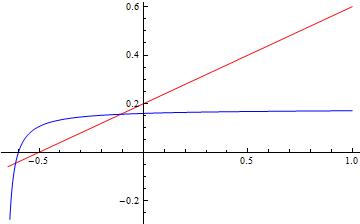}
\hfill
\caption{ $\beta_1k_1<b+\alpha, \ \ b(b+\alpha)>\alpha\beta_2k_2 $}\label{f6}
\label{figRight}
\end{multicols}
\end{figure}

\begin{figure}[h!]
\begin{multicols}{2}
\hfill
\includegraphics[width=6cm]{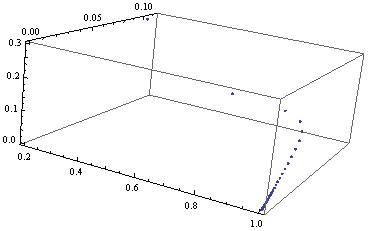}
\hfill
\caption{ $\alpha=0.1, b=0.6, \beta_1=0.5, \beta_2=0.01, k_1=1.2, k_2=1.1, x^{0}=0.2, u^{0}=0.1, y^0=0.3,$ $\lim_{n\to\infty}V^{(n)}(\lambda^{0})=\lambda_1.$}\label{f3}
\label{figLeft}
\hfill
\includegraphics[width=6cm]{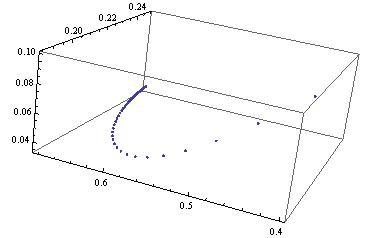}
\hfill
\caption{ $\alpha=0.01, b=0.1, \beta_1=0.8, \beta_2=0.2, k_1=0.5, k_2=1.2, x^{0}=0.2, u^{0}=0.4, y^0=0.1,$ $\lim_{n\to\infty}V^{(n)}(\lambda^{0})=\lambda_{11}.$}\label{f4}
\label{figRight}
\end{multicols}
\end{figure}

\end{document}